\def\a{\alpha}       \def\b{\beta}        
\def\d{\delta}       \def\la{\lambda}     
                  \def\z{\zeta}
\def\ch{\chi}         
\def\e{\varepsilon}       
\def\G{\Gamma}           
\def\F{\Phi}
\def\D{{\mathbb D}}  \def\T{{\mathbb T}}
\def\C{{\mathbb C}}  \def\N{{\mathbb N}}
\def\R{{\mathbb R}}
\def\ch{{\mathcal H}}    
    \def\cp{{\mathcal P}}
\def\rad{{\mathcal R}}   
\def\({\left(}       \def\){\right)}
\newtheorem{conj}{\sc Conjecture}
\newtheorem{prop}{\sc Proposition}
\newtheorem{lem}[prop]{\sc Lemma}
\newtheorem{thm}[prop]{\sc Theorem}
\newtheorem{cor}[prop]{\sc Corollary}
\newtheorem{other}{\sc Theorem}              
\begin{document}
\title[On the generalized Zalcman functional]{Applications of Livingston-type inequalities to the generalized Zalcman functional}
\author[I. Efraimidis]{Iason Efraimidis}
\address{Departamento de Matem\'aticas, Universidad Aut\'onoma de
Madrid, 28049 Madrid, Spain}
\email{iason.efraimidis@uam.es}
\author[D. Vukoti\'c]{Dragan Vukoti\'c}
\address{Departamento de Matem\'aticas, Universidad Aut\'onoma de
Madrid, 28049 Madrid, Spain} \email{dragan.vukotic@uam.es}
 \urladdr{http://www.uam.es/dragan.vukotic}
\thanks{The authors are partially supported by MTM2015-65792-P from MINECO/FEDER-EU and the Thematic ResearchNetwork MTM2015-69323-REDT, MINECO, Spain.}
\subjclass[2010]{30C45, 30C50}
\date{17 January, 2017.}
\keywords{Univalent functions, Zalcman functional, Hurwitz class, Noshiro-Warschawski class, Convex functions, Starlike functions, Hayman index}
\begin{abstract}
We obtain sharp estimates for a generalized Zalcman coefficient functional with a complex parameter for the Hurwitz class and the Noshiro-Warschawski class of univalent functions as well as for the closed convex hulls of the convex and starlike functions by using an inequality from \cite{E}. In particular, we generalize an inequality proved by Ma for starlike functions and answer a question from his paper \cite{M2}. Finally, we prove an asymptotic version of the generalized Zalcman conjecture for univalent functions and discuss various related or equivalent statements which may shed further light on the problem.
\end{abstract}
\maketitle
\section{Introduction} \label{sect-intro}
\par
Let $\D$ denote the unit disk in the complex plane and $S$ the class of normalized univalent (analytic and one-to-one) functions in $\D$ with the Taylor series $f(z)=z+\sum_{n=2}^\infty a_n z^n$. An instrumental problem in the development of this area for decades was the celebrated Bieberbach's conjecture: $|a_n|\le n$; see \cite{Du} for the history and a survey of the fundamental techniques. The problem was finally solved by L.~de~Bran\-ges in 1984 (see \cite{dB} or \cite{H} for a proof).
\par
It is well known that the coefficients of any $f$ in $S$ satisfy $|a_2^2-a_3| \le 1$; \textit{cf.\/} \cite[Theorem~1.5 and Problem~2, p.~23]{P}. \textit{Zalcman's conjecture\/} states that every $f$ in $S$ satisfies the more general sharp inequality $|a_n^2-a_{2n-1}| \le (n-1)^2$. The importance of the conjecture stems from the fact that it implies the Bieberbach conjecture. This was observed by Zalcman himself in the early 1970s (unpublished); Brown and Tsao \cite{BT} gave a slick short proof. They also proved the conjecture for the typically real and starlike functions \cite{BT}. Ma \cite{M1} did it for the closed convex hull of close-to-convex functions while Krushkal \cite{K1, K2} proved it in the general case for small values of $n$. More general versions of Zalcman's conjecture have also been considered  \cite{BT, M2, LP, LPQ} for the functionals such as $\F (f) = \la\,a_n^2 - a_{2n-1}$ and $\F (f) = \la  a_m a_n-a_{m+n-1}$ for certain positive values of $\la$. These functionals are important because they appear frequently in the coefficient formulas for the inversion transformation in the theory of univalent functions \cite[Ch.~2, p.~28]{Du}.
\par
In this paper we consider the general Zalcman-type functionals with complex $\la$, thus improving a number of results from our earlier unpublished manuscript \cite{EV}. The methods employed here are new and quite different; the results are much more general and do not seem likely to be obtained by the techniques used in \cite{EV}. As an improvement with respect to the earlier papers, we mention the following points:
\par
$\bullet$ Unlike in the earlier literature, here we formulate sharp results for all possible complex values of the parameter $\la$ instead of just the real and positive ones; this may shed some additional light on the conjecture. Specifically, we prove such results for the small subclasses of the normalized univalent functions $S$ such as the Hurwitz and Noshiro-Warschawski classes and also for the closed convex hulls of convex functions and starlike functions, two classes that also contain non-univalent functions.
\par
$\bullet$ Thanks  to our Lemma~\ref{Yakub}, each result formulated as an inequality that holds for all $\la\in\C$ can also be enunciated in an equivalent way as a single new inequality for the coefficients, which could be of some independent interest.
\par
$\bullet$ Theorem~\ref{thm-h} reflects a new phenomenon for the generalized functional $\F (f)$ $= \la  a_m a_n-a_{m+n-1}$: the sharp bounds obtained differ in an essential way in the case $m\neq n$ from the case $m=n$.
\par
$\bullet$ We generalize the results proved by Brown and Tsao \cite{BT} and by Ma \cite{M2} for starlike functions and also answer a question from  \cite{M2} on the smallest positive $\la$ for which Ma's estimates hold. \par
$\bullet$ We show that the generalized Zalcman conjecture is asymptotically true for every complex value of $\la$ and is also equivalent to other related statements which may provide further insight into the problem.
\par
$\bullet$ We improve upon the observation that the Zalcman conjecture implies the Bieberbach conjecture by showing that this implication passes through three related but weaker conjectures than Zalcman's which may be of independent interest.
\par\smallskip
\textsc{Acknowledgments}. {\small The authors would like to thank Professor Dmitry V. Yakub\-ovich for some useful comments and, in particular, for suggesting that a statement like Lemma~\ref{Yakub} should be true.}
\section{Preliminaries} \label{sect-prelim}
\par\medskip
\textbf{A useful lemma}. The following simple but very useful lemma for  complex numbers will allow us to rephrase several statements (to be proved later) in a different language. In this way, infinitely many conditions can typically be replaced by just a single one of different type.
\begin{lem}\label{Yakub}
Let $a$, $b\in\C$ be arbitrary and $C$, $M>0$. Then
\begin{equation}
 |a+\la b| \le M \max\{C,|\la|\}\,, \quad \text{for all} \ \ \la\in\C
 \label{ineq-max-1}
\end{equation}
if and only if
\begin{equation}
|a|+|b| C\le M C\,.
 \label{ineq-max-2}
\end{equation}
Equality holds in \eqref{ineq-max-1} if and only if it holds in \eqref{ineq-max-2} if and only if $|\la |=C$ and $\arg \la =\arg a - \arg b$.
\end{lem}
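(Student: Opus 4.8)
The plan is to establish the two implications separately, using only the triangle inequality together with the elementary fact that $\max\{C,|\la|\}$ equals $C$ on the disk $|\la|\le C$ and equals $|\la|$ outside it; the equality statement then falls out by recording where each estimate is tight.

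First, for \eqref{ineq-max-2} $\Rightarrow$ \eqref{ineq-max-1}, I would begin from $|a+\la b|\le |a|+|\la|\,|b|$ and split according to the two regimes of the maximum. Note at the outset that \eqref{ineq-max-2} forces $|b|\le M$, since $|a|\ge 0$ and $C>0$. If $|\la|\le C$ the maximum is $C$, and $|a|+|\la|\,|b|\le |a|+C|b|\le MC$ is exactly \eqref{ineq-max-2}. If $|\la|>C$ the maximum is $|\la|$, and the required bound $|a|+|\la|\,|b|\le M|\la|$ rearranges to $|a|\le |\la|(M-|b|)$; since $M-|b|\ge 0$ and $|\la|>C$, this follows from $|a|\le C(M-|b|)$, which is just \eqref{ineq-max-2} rewritten.

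For the converse I would simply test \eqref{ineq-max-1} at one optimally chosen $\la$. Picking $|\la|=C$ and $\arg\la=\arg a-\arg b$ aligns $a$ with $\la b$, so the triangle inequality becomes an equality and $|a+\la b|=|a|+C|b|$, while $\max\{C,|\la|\}=C$; hence \eqref{ineq-max-1} collapses to $|a|+C|b|\le MC$, which is \eqref{ineq-max-2}.

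For the equality description I would assume the generic situation $ab\neq 0$ so that the arguments are defined, and retrace the estimates noting tightness. The key point, which I would flag as the only real subtlety, is that on the outer region $|\la|>C$ the inequality is \emph{strict}: there $M>|b|$ (from $|a|>0$ and \eqref{ineq-max-2}), so $|\la|(M-|b|)>C(M-|b|)\ge |a|$ and therefore $|a+\la b|\le |a|+|\la|\,|b|<M|\la|$. Thus no extremal $\la$ can lie outside the disk, and equality in \eqref{ineq-max-1} forces $|\la|\le C$ with the maximum equal to $C$. Within $|\la|\le C$, equality in the chain $|a+\la b|\le |a|+|\la|\,|b|\le |a|+C|b|\le MC$ requires all three links to be tight: the first gives $\arg(\la b)=\arg a$, i.e.\ $\arg\la=\arg a-\arg b$; the second gives $|\la|=C$ because $|b|>0$; and the third is precisely equality in \eqref{ineq-max-2}. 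Conversely these conditions make \eqref{ineq-max-1} an equality, which closes the chain of equivalences. In the excluded degenerate cases $a=0$ or $b=0$ the extremal set is a whole region rather than a single circle and the stated argument condition is vacuous or undefined, which is why the sharp characterization is phrased for $ab\neq 0$.
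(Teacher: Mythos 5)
Your proof is correct and takes essentially the same route as the paper: the converse direction tests \eqref{ineq-max-1} at the aligned $\la$ with $|\la|=C$, and the forward direction is the triangle inequality plus the two regimes of the maximum (your case split on $|\la|\le C$ versus $|\la|>C$ is just an unrolled version of the paper's one-line chain with the factor $\max\{1,|\la|/C\}$). Your equality analysis --- in particular the observation that the bound is strict for $|\la|>C$ when $a\neq 0$, and the caveat about the degenerate cases $a=0$ or $b=0$ --- is actually more explicit than the paper's, which settles this by ``inspecting the chains of inequalities.''
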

\begin{proof}
If \eqref{ineq-max-1} holds, we can choose $\la $ with $|\la |=C$ and $\arg \la =\arg a - \arg b$ to get $|a|+|b| C\le M C$.
\par
Conversely, assuming that $|a|+|b| C\le M C$, the triangle inequality yields
\begin{eqnarray*}
 |a+\la b| &\le & |a|+\left|\frac{\la}{C}\right| |b| C
\\
 &\le & \max\left\{1,\left|\frac{\la}{C}\right|\right\} |a| + \max\left\{1,\left|\frac{\la}{C}\right|\right\} |b| C
\\
 &\le &  M C \max\left\{1,\left|\frac{\la}{C}\right|\right\} = M \max\{C,|\la|\} \,.
\end{eqnarray*}
By inspecting the chains of inequalities in \eqref{ineq-max-1} and \eqref{ineq-max-2}, it is quite direct to see that equality is possible in either case only when $|\la |=C$ and $\arg a = \arg (\la b)$ as claimed.
\end{proof}
\par\medskip
\textbf{On Livingston-type inequalities}. The class $\cp$ of analytic functions $g$ such that Re\,$g>0$ in $\D$, normalized  so that $g(0)=1$, is considered frequently in connection with univalent functions. The classical Carath\'eodory lemma \cite[Chapter~2]{Du} states that the Taylor coefficients $p_n$ of any such function $g$ must satisfy the sharp inequality $|p_n|\le 2$, $n\ge 1$. Another inequality: $|p_n - p_k\,p_{n-k}|\le 2$ for the functions in $\cp$ was proved by Livingston \cite[Lemma~1]{L}. The following generalization was obtained by the first author in \cite{E} and will be crucial in this paper.
\par\medskip
\begin{other} \label{other-ie}
If $g\in\cp$, $g(z)=1+ \sum_{n=1}^\infty p_n z^n$ and $1\leq k \leq n-1$ then
\begin{equation} \label{Efr}
 |p_n - w p_k p_{n-k}| \leq 2\max\{1, |1-2w|\}, \quad \text{for all} \quad w\in\C\,,
\end{equation}
and the inequality is sharp.
\end{other}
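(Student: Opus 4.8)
The plan is to strip the parameter $w$ off first, trading the entire family of inequalities \eqref{Efr} (one for each $w\in\C$) for a single coefficient inequality via Lemma~\ref{Yakub}, and only then to do analysis on $g$. Setting $\la=1-2w$, one checks directly that
\[
 p_n-w\,p_kp_{n-k}=\Big(p_n-\tfrac12 p_kp_{n-k}\Big)+\la\cdot\tfrac12 p_kp_{n-k}\,.
\]
Hence, with $a=p_n-\frac12 p_kp_{n-k}$, $b=\frac12 p_kp_{n-k}$, $C=1$ and $M=2$, the assertion that \eqref{Efr} holds for all $w\in\C$ is literally \eqref{ineq-max-1} (note that $\la$ runs over all of $\C$ as $w$ does). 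By Lemma~\ref{Yakub} this is equivalent to \eqref{ineq-max-2}, i.e. $|a|+|b|\le 2$, namely
\[
 \Big|p_n-\tfrac12 p_kp_{n-k}\Big|+\tfrac12\,|p_k|\,|p_{n-k}|\le 2\,. \qquad(\star)
\]
So everything reduces to proving the single sharp inequality $(\star)$.

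To prove $(\star)$ I would invoke the Carath\'eodory--Toeplitz description of $\cp$: since $\mathrm{Re}\,g>0$, the Hermitian Toeplitz matrix with entries $p_{i-j}$ (where $p_0=2$ and $p_{-m}=\overline{p_m}$) is positive semidefinite, hence so is every principal submatrix. Choosing the index set $\{0,k,n\}$, which is legitimate because $0<k<n$, gives
\[
 \det\begin{pmatrix} 2 & \overline{p_k} & \overline{p_n}\\ p_k & 2 & \overline{p_{n-k}}\\ p_n & p_{n-k} & 2\end{pmatrix}\ge 0\,,
\]
and expanding the determinant yields the phase-dependent bound
\[
 |p_n|^2-\mathrm{Re}\big(\overline{p_n}\,p_kp_{n-k}\big)\le 4-|p_k|^2-|p_{n-k}|^2\,.
\]

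The decisive move is the substitution $u=p_n-\frac12 p_kp_{n-k}$: writing $p_n=u+\frac12 p_kp_{n-k}$, the cross term $\mathrm{Re}(\overline{p_n}p_kp_{n-k})$ combines with $|p_n|^2$ so that all phase information cancels, and the minor inequality collapses to the phase-free estimate $|u|^2\le 4-|p_k|^2-|p_{n-k}|^2+\frac14|p_k|^2|p_{n-k}|^2$. Since $|p_k|^2+|p_{n-k}|^2\ge 2|p_k||p_{n-k}|$, the right-hand side is at most $\big(2-\frac12|p_k||p_{n-k}|\big)^2$; as $|p_k|,|p_{n-k}|\le 2$ the quantity being squared is nonnegative, so taking square roots gives exactly $(\star)$. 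For sharpness I would exhibit two extremal functions: $g(z)=\frac{1+z}{1-z}$, for which all $p_m=2$, forces equality when $|1-2w|\ge 1$, while $g(z)=\frac{1+z^n}{1-z^n}$, for which $p_n=2$ but $p_k=p_{n-k}=0$, forces equality when $|1-2w|\le 1$.

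I expect the conceptual crux to be the reduction via Lemma~\ref{Yakub} that isolates the correct single inequality $(\star)$; the one genuinely delicate computational point is the vanishing of the cross term $\mathrm{Re}(\overline{p_n}p_kp_{n-k})$ under $u=p_n-\frac12 p_kp_{n-k}$, since this is precisely what lets a phase-free bound descend from a phase-dependent $3\times 3$ minor. After that, the concluding step is merely the arithmetic--geometric mean inequality $(|p_k|-|p_{n-k}|)^2\ge 0$.
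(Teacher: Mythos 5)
Your argument is correct, and it is a genuinely different route from the paper's, for the simple reason that the paper contains no proof of Theorem~\ref{other-ie} at all: it is imported from \cite{E}, and the paper then uses Lemma~\ref{Yakub} to \emph{deduce} Proposition~\ref{prop-known-gen} from it. You run the logic in the opposite direction. Your first step is sound: with $\la=1-2w$ the family \eqref{Efr} is exactly \eqref{ineq-max-1} with $a=p_n-\frac12 p_kp_{n-k}$, $b=\frac12 p_kp_{n-k}$, $M=2$, $C=1$, so by the equivalence in Lemma~\ref{Yakub} (proved independently by the triangle inequality, so there is no circularity) the whole family collapses to the single inequality $(\star)$, which is precisely Proposition~\ref{prop-known-gen}. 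Your proof of $(\star)$ checks out: the principal $3\times3$ Toeplitz minor on the index set $\{0,k,n\}$ expands to
$$
8-2|p_k|^2-2|p_{n-k}|^2-2|p_n|^2+2\,\mathrm{Re}\big(\overline{p_n}\,p_kp_{n-k}\big)\ge 0\,,
$$
the substitution $u=p_n-\frac12 p_kp_{n-k}$ does make the cross term cancel, leaving $|u|^2\le 4-|p_k|^2-|p_{n-k}|^2+\frac14|p_k|^2|p_{n-k}|^2$, and the AM--GM step together with $|p_k|,|p_{n-k}|\le2$ (itself the $2\times2$ minor) gives $|u|\le 2-\frac12|p_k||p_{n-k}|$, which is $(\star)$. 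The two extremal functions do produce equality in the complementary regimes $|1-2w|\ge1$ and $|1-2w|\le1$, so sharpness for every $w$ is established. What your approach buys is a short, self-contained, purely linear-algebraic proof of the cited Theorem~\ref{other-ie} (and simultaneously of Proposition~\ref{prop-known-gen}, for which the paper otherwise points to \cite{E} and to Campschroer's thesis); the only mild cost is that the positive-semidefiniteness of the Toeplitz matrix is an external input, though it is completely classical and follows in one line from the Herglotz representation already quoted in the paper.
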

\par\medskip
We can now deduce an inequality which may be of independent interest. It can be seen as a generalization of the well-known estimate:
$$
\left|p_2 -\frac{1}{2}p_1^2\right| + \frac{1}{2}|p_1|^2  \leq 2\,,
$$
which can be found in \cite[p.~166]{P} and can also be deduced from the classical Schwarz-Pick lemma.
\par
\begin{prop} \label{prop-known-gen}
If $g\in\cp$, $g(z)=1+ \sum_{n=1}^\infty p_n z^n$ and $1\leq k \leq n-1$ then
\begin{equation} \label{no-w}
\left|p_n -\frac{1}{2}p_k p_{n-k}\right| + \frac{1}{2}|p_k p_{n-k}|  \leq 2\,.
\end{equation}
The inequality is sharp.
\end{prop}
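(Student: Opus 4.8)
The plan is to obtain \eqref{no-w} essentially for free by feeding Theorem~\ref{other-ie} into the elementary Lemma~\ref{Yakub}. The key observation is that the estimate \eqref{Efr} holds for \emph{all} $w\in\C$, and a single affine reparametrization of $w$ converts it into exactly the hypothesis \eqref{ineq-max-1} of Lemma~\ref{Yakub}.

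First I would substitute $w=\tfrac{1}{2}(1-\la)$, letting $\la$ range over $\C$ as $w$ does. With this choice $1-2w=\la$, so the right-hand side of \eqref{Efr} becomes $2\max\{1,|\la|\}$. On the left-hand side one computes
\[
 p_n - w\,p_k p_{n-k} = \bigl(p_n - \tfrac{1}{2}p_k p_{n-k}\bigr) + \la\,\bigl(\tfrac{1}{2}p_k p_{n-k}\bigr),
\]
so that Theorem~\ref{other-ie} reads $|a+\la b|\le 2\max\{1,|\la|\}$ for all $\la\in\C$, where $a=p_n-\tfrac{1}{2}p_k p_{n-k}$ and $b=\tfrac{1}{2}p_k p_{n-k}$. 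This is precisely condition \eqref{ineq-max-1} with $C=1$ and $M=2$.

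Applying Lemma~\ref{Yakub} in the direction from \eqref{ineq-max-1} to \eqref{ineq-max-2} then yields $|a|+|b|\le 2$, which is exactly \eqref{no-w}. Sharpness is automatic: since by Lemma~\ref{Yakub} equality in \eqref{ineq-max-1} and in \eqref{ineq-max-2} occur simultaneously, and \eqref{Efr} is sharp, the extremal function for Theorem~\ref{other-ie} also realizes equality in \eqref{no-w}. I expect no genuine obstacle here; the only thing to verify carefully is the bookkeeping of the substitution $w=\tfrac{1}{2}(1-\la)$, namely that it both makes $1-2w=\la$ and splits the left-hand side so that the constant term is exactly $p_n-\tfrac{1}{2}p_k p_{n-k}$ and the coefficient of $\la$ is exactly $\tfrac{1}{2}p_k p_{n-k}$. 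Once $a$, $b$, $C=1$ and $M=2$ are matched to Lemma~\ref{Yakub}, the inequality and its sharpness follow at once.
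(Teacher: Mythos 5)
Your proposal is correct and is essentially the paper's own proof: the authors make the identical reparametrization $\la=1-2w$, merely clearing the factor $\tfrac12$ by writing \eqref{Efr} as $|2p_n-p_kp_{n-k}+(1-2w)p_kp_{n-k}|\le 4\max\{1,|1-2w|\}$ before invoking Lemma~\ref{Yakub} with $C=1$, $M=4$. Your bookkeeping ($a=p_n-\tfrac12 p_kp_{n-k}$, $b=\tfrac12 p_kp_{n-k}$, $C=1$, $M=2$) and the sharpness transfer via the equality clause of Lemma~\ref{Yakub} are both sound.
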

\begin{proof}
Rewriting inequality \eqref{Efr} from Theorem~\ref{other-ie} in the form
$$
 |2p_n -p_k p_{n-k} +(1-2 w) p_k p_{n-k}| \leq 4 \max\{1, |1-2w|\}\,,
$$
the statement follows by Lemma~\ref{Yakub}.
\end{proof}
\par\medskip
We note that the inequality stated in Proposition~\ref{prop-known-gen} also appeared (with a different proof) in Campschroer's thesis \cite[\S1.4]{C}  which contains various interesting ideas on extremal problems.
\par
The combined use of Theorem~\ref{other-ie}, Lemma~\ref{Yakub}, and Proposition~\ref{prop-known-gen} will be the key to a number of results throughout this paper.
\section{Sharp estimates for some special classes} \label{sect-special}
\par
In this section we will obtain various estimates on the generalized Zalcman functional $\Phi(f)=\la  a_m a_n -a_{m+n-1}$ with complex values $\la$. We do this for four different classes of functions which are either subclasses of $S$ or closed convex hulls of important subclasses of $S$ (which also contain non-univalent functions). All estimates are sharp and each one of them is also formulated in an equivalent way
\par\medskip
\textbf{The Hurwitz class}. The name \textit{Hurwitz class\,} is often used to denote the set $\ch$ of all functions $f$ of the form
$$
 f(z)=z+a_2 z^2+ a_3 z^3+\ldots\,,
$$
analytic in $\D$ and with the property that $\sum_{n=2}^\infty n |a_n|\le 1$. Obviously, the $n$-th coefficient of a function in $\ch$ is subject to the estimate $|a_n|\le 1/n$ for each $n$. The simplest example of a function in $\ch$ is the polynomial $P_n (z)= z + \frac{z^n}{n}$, $n\ge 2$. It is a well-known exercise that $\ch\subset S$. The reader is referred to \cite{G} for further properties of $\ch$.
\par
For the functions in this class we obtain a much smaller bound on the Zalcman functional than for the entire class $S$. We stress the difference between items (a) and (b) of the theorem below: the estimates on the functional $\Phi(f)=\la  a_m a_n -a_{m+n-1}$ differ in an essential way in the cases $m=n$ and $m\neq n$, with the presence of an extra factor of four in the denominator in the latter case.
\par\medskip
\begin{thm} \label{thm-h}
(a) If  $f\in \ch$ and $n \ge 2$ then the following inequality holds for the coefficients of $f$:
\begin{equation} \label{h-ns}
 n^2|a_n^2| + (2n-1) |a_{2n-1}| \le 1\,.
\end{equation}
This single inequality is equivalent to
\begin{equation} \label{h-n}
 |\la\,a_n^2 - a_{2n-1}| \leq \max\left\{ \frac{|\la|}{n^2},
 \frac{1}{2n-1}\right\}\,, \quad \textrm{for\, all\, } \la\in\C\,.
\end{equation}
Equality holds if and only if
$$
 f(z) =
\begin{cases}
 z + \frac{\alpha}{2n-1} \, z^{2n-1}, \quad \text{for} \; \; |\la|\leq\frac{n^2}{2n-1}  \\[\jot]
 z + \frac{\alpha}{n} \, z^n, \quad\qquad \text{for} \; \; |\la| \geq \frac{n^2}{2n-1},
\end{cases}
$$
where $\alpha$ is a complex number of modulus one.
\par\smallskip
(b) If $f\in \ch$, then for any two distinct values $m$, $n\ge 2$ we have
\begin{equation} \label{h-mns}
 4 m n |a_m a_n| + (m+n-1) |a_{m+n-1}| \le 1\,.
\end{equation}
The last inequality is equivalent to
\begin{equation} \label{h-mn}
 |\la\,a_m a_n - a_{m+n-1}| \leq \max\left\{ \frac{|\la|}{4mn},  \frac{1}{m+n-1}\right\}\,, \quad \textrm{for\, all\, } \la\in\C\,.
\end{equation}
Equality holds if and only if
$$
f(z) =
\begin{cases}
z + \frac{\alpha}{m+n-1} \, z^{m+n-1}, \quad \text{for} \; \; |\la|\leq\frac{4mn}{m+n-1}  \\[\jot]
z + \frac{\alpha}{2m} \, z^m +\frac{\beta}{2n} \, z^n , \;\quad \text{for} \; \; |\la| \geq \frac{4mn}{m+n-1},
\end{cases}
$$
where $\alpha$ and $\beta$ are complex numbers such that $|\alpha|=|\beta|=1$.
\end{thm}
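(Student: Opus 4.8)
The plan is to prove the two ``single'' inequalities \eqref{h-ns} and \eqref{h-mns} directly from the defining property $\sum_{k=2}^\infty k|a_k|\le 1$ of the class $\ch$, and then to obtain the equivalent $\la$-families \eqref{h-n} and \eqref{h-mn} as immediate applications of Lemma~\ref{Yakub}. For part (a) the correct identification is $a=-a_{2n-1}$, $b=a_n^2$, $M=1/n^2$ and $C=n^2/(2n-1)$, so that $\max\{|\la|/n^2,1/(2n-1)\}=M\max\{C,|\la|\}$ and the condition \eqref{ineq-max-2} reads exactly \eqref{h-ns}; for part (b) one takes $a=-a_{m+n-1}$, $b=a_ma_n$, $M=1/(4mn)$ and $C=4mn/(m+n-1)$. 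Thus the whole analytic content lies in the two single inequalities together with the description of the extremal functions.

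For (a) I would use only two terms of the defining sum, which are necessarily distinct since $n\ge 2$ forces $2n-1>n$, namely $n|a_n|+(2n-1)|a_{2n-1}|\le\sum_{k=2}^\infty k|a_k|\le 1$. In particular $n|a_n|\le 1$, and for a real $t\in[0,1]$ one has $t^2\le t$; applying this with $t=n|a_n|$ gives $n^2|a_n|^2=(n|a_n|)^2\le n|a_n|$. Substituting into the previous line yields $n^2|a_n^2|+(2n-1)|a_{2n-1}|\le n|a_n|+(2n-1)|a_{2n-1}|\le 1$, which is \eqref{h-ns}.

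For (b) the three indices $m$, $n$, $m+n-1$ are pairwise distinct, again because $m,n\ge 2$ and $m\ne n$, so with $x:=m|a_m|+n|a_n|$ we have $x+(m+n-1)|a_{m+n-1}|\le 1$. The arithmetic-geometric mean inequality in the form $4uv\le(u+v)^2$ gives $4mn|a_ma_n|=4\,(m|a_m|)(n|a_n|)\le x^2$, whence $4mn|a_ma_n|+(m+n-1)|a_{m+n-1}|\le x^2+(1-x)=1-x(1-x)\le 1$, since $0\le x\le 1$. This proves \eqref{h-mns}. It is precisely this step that accounts for the difference advertised in the statement: when $m=n$ one may place all the admissible mass on a single coefficient and then square it, losing no factor $2$, whereas when $m\ne n$ the product $a_ma_n$ is largest when the mass is split evenly between $a_m$ and $a_n$, producing the extra factor $4$.

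The step I expect to require the most care is the equality analysis, which I would read off by tracking when each inequality in the two chains is an equality. In (a), equality forces $n|a_n|\in\{0,1\}$ (from $t^2=t$), all coefficients other than $a_n$ and $a_{2n-1}$ to vanish, and the defining sum to equal $1$; this leaves precisely $f(z)=z+\frac{\alpha}{2n-1}z^{2n-1}$ (when $a_n=0$) or $f(z)=z+\frac{\alpha}{n}z^n$ (when $|a_n|=1/n$), with $|\alpha|=1$. In (b), the equality case of AM-GM forces $m|a_m|=n|a_n|$ and $x\in\{0,1\}$, giving either $f(z)=z+\frac{\alpha}{m+n-1}z^{m+n-1}$ or $f(z)=z+\frac{\alpha}{2m}z^m+\frac{\beta}{2n}z^n$. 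The delicate point is then to match these families to the stated ranges of $|\la|$: a direct substitution shows that the first family attains equality in \eqref{h-n} (resp.\ \eqref{h-mn}) for every $\la$ with $|\la|\le n^2/(2n-1)$ (resp.\ $|\la|\le 4mn/(m+n-1)$) and the second for every $\la$ with the reverse inequality, the two overlapping exactly at the critical modulus where both bounds coincide. This behaviour---equality over an entire disk or its complement rather than at a single value of $\la$---reflects the degenerate situation in Lemma~\ref{Yakub} in which the coefficient $b$ vanishes, and it is the one feature that must be handled by hand rather than quoted directly from the lemma's equality clause.
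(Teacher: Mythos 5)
Your proposal is correct and follows essentially the same route as the paper: the same truncation $n|a_n|+(2n-1)|a_{2n-1}|\le 1$ with $t^2\le t$ in part (a), the same AM--GM step $4xy\le(x+y)^2\le x+y$ in part (b), and the same application of Lemma~\ref{Yakub} with $M=1/n^2$, $C=n^2/(2n-1)$ (resp. $M=1/(4mn)$, $C=4mn/(m+n-1)$). Your equality analysis is in fact slightly more explicit than the paper's (which compresses it to ``easily leads to the desired conclusion''), and your remark about the degenerate case $b=0$ of the lemma explaining why equality holds on a whole disk of $\la$ is a correct reading of what the paper leaves implicit.
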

\begin{proof}
(a) By the definition of $\ch$ we have that $n|a_n| \le1$ and  therefore
$$
 n^2|a_n|^2+(2n-1)|a_{2n-1}| \; \le \; n|a_n| +(2n-1)|a_{2n-1}| \; \le \; 1.
$$
Taking
$$
 M=\frac{1}{n^2}\,, \quad C=\frac{n^2}{2n-1}
$$
in Lemma \ref{Yakub}, the above inequality is equivalent to \eqref{h-n}. Obviously, equality is only possible when $n|a_n|=1$ or $n|a_n|=0$. The first case implies that $a_{2n-1}=0$ and all remaining coefficients are zero. The second yields that $(2n-1)a_{2n-1}=1$ and all remaining coefficients are zero, which easily leads to the desired conclusion.
\par\smallskip
(b) The proof is slightly more involved in the case $m\neq n$. Set $x=m|a_m|$ and $y=n|a_n|$. Clearly $x,y\ge 0$ and by the definition of $\ch$ they satisfy $x+y\le1$. This and  $(x-y)^2\ge 0$ imply
$$
 4xy \le (x+y)^2 \le \, x+y\,.
$$
It follows readily from the definition of $\ch$ that
$$
 4 m n |a_m a_n| + (m+n-1) |a_{m+n-1}| \le 1\,.
$$
Using Lemma \ref{Yakub} with
$$
 M=\frac{1}{4mn}\,, \quad C=\frac{4mn}{m+n-1}
$$
we see that this is equivalent to  \eqref{h-mn}.
\par
Equality holds in (b) if and only if either $m|a_m|=n|a_n|=0$ or $m|a_m|=n|a_n|=1/2$, which again easily leads to the claim on extremal functions.
\end{proof}
\par\medskip
\textbf{The Noshiro-Warschawski class}. We now consider the functions in the normalized class
$$
 \rad = \{ f \in \ch(\D)\,\colon\,\mathrm{Re} f^\prime (z) > 0, f(0) = 0\,, f^\prime (0)=1\}\,.
$$
A typical example of a function in $\rad$ is $f(z)=2\log\dfrac1{1-z} - z$ whose derivative is $f^\prime(z)= (1+z)/(1-z)$, a mapping of $\D$ onto the right half-plane. The branch of the logarithm is chosen so that $\log 1=0$.
\par
Note that $\rad\subset S$ by the basic Noshiro-Warschawski lemma \cite[Theorem~2.16]{Du}. MacGregor \cite{MG} showed that for $f$ in $\rad$ we have $|a_n|\le 2/n$. On the other hand, $\rad$ contains the Hurwitz class $\ch$. This can be seen as follows. If $f$ is a function in $\ch$ other than the identity, then  $f^\prime(0)=1$ and, when $z\neq 0$, we have the strict inequality
$$
 \textrm{Re}\,f^\prime (z) = 1 + \sum_{n=2}^\infty n \textrm{Re}\,\{a_n z^{n-1}\} \ge 1 - \sum_{n=2}^\infty n |a_n| |z|^{n-1} > 1 - \sum_{n=2}^\infty n |a_n| \ge 0\,.
$$
It should not be too surprising to have larger upper bounds for the generalized Zalcman functional among the functions in $\rad$ than for those in $\ch$. This is indeed the case, as our next result shows.
\par
\begin{thm} \label{thm-nw}
Let $f\in\rad$ and $m,n\geq2$. Then the following inequality holds for the coefficients of $f$:
$$
\left| \frac{mn}{2(m+n-1)} a_m a_n - a_{m+n-1}\right| + \frac{mn |a_m a_n|}{2(m+n-1)} \leq \frac{2}{m+n-1}\,.
$$
This is equivalent to
$$
|\la  a_m a_n -a_{m+n-1}| \leq \frac{2}{m+n-1} \max\left\{1, \left|1-2\la \frac{m+n-1}{mn}\right|\right\} \quad \text{for all} \quad \la \in\C\,.
$$
Equality holds in both inequalities for the function
\begin{equation}
 f(z) =2 \log\frac{1}{1-z} -z
 \label{R}
\end{equation}
when $\left|1-2\la \frac{m+n-1}{mn}\right|\geq1$ and for
$$
f(z) = \int_{[0,z]} \frac{1+\z^{m+n-2}}{1-\z^{m+n-2}} d\z
$$
(meaning integration over the segment from $0$ to $z$) when $\left|1-2\la \frac{m+n-1}{mn}\right|<1$.
\end{thm}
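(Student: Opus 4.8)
The plan is to reduce the whole statement to the class $\cp$ by differentiating. Indeed $f\in\rad$ holds if and only if $g:=f'$ belongs to $\cp$: writing $g(z)=1+\sum_{k=1}^\infty p_k z^k$ we have $p_k=(k+1)a_{k+1}$, i.e.\ $a_j=p_{j-1}/j$ for every $j\ge2$. Substituting these relations into the functional and factoring out $1/(m+n-1)$ gives
\begin{equation*}
 \la a_m a_n - a_{m+n-1}
 = \frac{1}{m+n-1}\left( \la\frac{m+n-1}{mn}\,p_{m-1}p_{n-1} - p_{m+n-2}\right),
\end{equation*}
so with the abbreviation $w=\la(m+n-1)/(mn)$ the quantity to be bounded is $\frac{1}{m+n-1}\,|p_{m+n-2}-w\,p_{m-1}p_{n-1}|$. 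This is exactly the shape handled by Theorem~\ref{other-ie}.

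Next I would apply Theorem~\ref{other-ie} with the index choice $N=m+n-2$ and $k=m-1$, so that $N-k=n-1$; the requirement $1\le k\le N-1$ holds precisely because $m,n\ge2$. This yields $|p_{m+n-2}-w\,p_{m-1}p_{n-1}|\le 2\max\{1,|1-2w|\}$, and dividing by $m+n-1$ and re-expressing $w$ in terms of $\la$ produces at once the second (parametric) inequality of the statement.

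To obtain the first, equivalent inequality I would imitate the derivation of Proposition~\ref{prop-known-gen}. Rewriting the bound of Theorem~\ref{other-ie} as
\begin{equation*}
 \bigl|\,(2p_{m+n-2}-p_{m-1}p_{n-1})+(1-2w)\,p_{m-1}p_{n-1}\,\bigr| \le 4\max\{1,|1-2w|\},
\end{equation*}
I would apply Lemma~\ref{Yakub} with $a=2p_{m+n-2}-p_{m-1}p_{n-1}$, $b=p_{m-1}p_{n-1}$, $C=1$, $M=4$ (as $w$ runs over $\C$, so does $1-2w$). This gives $|2p_{m+n-2}-p_{m-1}p_{n-1}|+|p_{m-1}p_{n-1}|\le4$; translating back through $p_{m-1}p_{n-1}=mn\,a_ma_n$ and $p_{m+n-2}=(m+n-1)a_{m+n-1}$ and dividing by $2(m+n-1)$ yields the first inequality. (Equivalence of the two forms is in any case a direct consequence of Lemma~\ref{Yakub} applied to the parametric version.)

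Finally, for sharpness I would display the two extremal functions explicitly. The function in \eqref{R} has $f'(z)=(1+z)/(1-z)$, whence $p_k=2$ for all $k$; then the left-hand side equals $|2-4w|=2|1-2w|$, which meets the bound $2\max\{1,|1-2w|\}$ exactly when $|1-2w|\ge1$, i.e.\ when $|1-2\la(m+n-1)/(mn)|\ge1$. For the complementary range the integral function has $f'(z)=(1+z^{m+n-2})/(1-z^{m+n-2})$, so $p_k=2$ when $k$ is a positive multiple of $m+n-2$ and $p_k=0$ otherwise; since $1\le m-1,n-1<m+n-2$ we get $p_{m-1}=p_{n-1}=0$ and $p_{m+n-2}=2$, so the left-hand side equals $2=2\max\{1,|1-2w|\}$ whenever $|1-2w|\le1$. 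The only step needing genuine care is the bookkeeping of indices in the passage $f\mapsto g=f'$ together with the substitution $w=\la(m+n-1)/(mn)$, which is precisely what converts the Livingston-type estimate of Theorem~\ref{other-ie} into the asserted bound; everything else is routine.
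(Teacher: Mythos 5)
Your proposal is correct and follows essentially the same route as the paper: pass to $g=f'\in\cp$ via $p_{j-1}=ja_j$, apply Theorem~\ref{other-ie} (for the parametric bound) and Proposition~\ref{prop-known-gen} (for the single inequality) with the substitution $w=\la(m+n-1)/(mn)$, and verify the same two extremal functions. The index bookkeeping and the sharpness computations all check out.
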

\begin{proof}
Let $f\in\rad$, $f(z)=z+\sum_{n=2}^\infty a_n z^n$ in $\D$. Then $g=f^\prime\in \cp$ and, writing $g(z)=1+\sum_{n=1}^\infty p_n z^n$, the coefficients of $f$ and $g$ are related by $p_{n-1}=n a_n$. The desired inequalities now follow from Theorem~\ref{other-ie} and Proposition~\ref{prop-known-gen}.
\par
The function given by \eqref{R} has coefficients $2/n$ and yields equality in the cases indicated. For the remaining case, when $\left|1-2\la \frac{m+n-1}{mn}\right|<1$, we find that the function
$$
f^\prime(z) = \frac{1+z^{m+n-2}}{1-z^{m+n-2}}
$$
belongs to $\cp$. Since $f(0)=0$, it follows that $f\in\rad$. Clearly, $$
f(z) = z+ \sum_{k=1}^\infty \frac{2}{k(m+n-2)+1} z^{k(m+n-2)+1}\,,
$$
and it is easily checked that equality is attained for this function.
\end{proof}
We observe that one can write down an explicit formula for the extremal function written above as a primitive but there is really no need for this.
\par\medskip
\textbf{The closed convex hull of convex functions}. Denote by $C$ the class of convex functions in $S$. A typical example is the half-plane function $\ell(z)=\frac{z}{1-z}$. It is well known that the coefficient estimate can be improved a great deal for the functions in $C$: by a theorem of Loewner, they must satisfy $|a_n|\le 1$, with equality only for the function $\ell$ and its rotations (see \cite[Corollary on p.~45]{Du}).
\par
Denote by ${\rm co}(C)$ the convex hull of $C$ and by $\overline{{\rm co}(C)}$ its closure in the topology of uniform convergence on compact subsets of $\D$. Note that this larger class no longer consists exclusively of univalent functions. A well-known result of Marx and Strohh\"acker \cite[p.45]{P} implies that
$$
 \overline{{\rm co}(C)} = \big\{ f\in H(\D) \, : \, {\rm Re} \big(f(z)/z\big) > 1/2, \:  f(0)=f^\prime (0)-1=0 \big\}\,.
$$
Thus, a connection with the class $\cp$ is readily established by the formula
$$
 g(z)=
\begin{cases}
 2\frac{f(z)}{z}-1, \quad \text{for} \; \; z\neq 0 \\[\jot]
 1, \quad\qquad \text{for} \; \; z=0,
\end{cases}
$$
that is, $f\in \overline{{\rm co}(C)}$ if and only if $g\in \cp$.
\par\medskip
For real parameters $\la$ and in the case when $m=n$, the inequality in the following theorem appeared in \cite{EV} for $0\le \la \le 2$ and in \cite{LPQ} for $\la\ge2$. Here we give a complete answer for all complex $\la$ and all $m,n\geq2$.
\begin{thm} \label{thm-conv}
Let $f$ be in $\overline{{\rm co}(C)}$ and $m,n\geq2$. Then
$$
 |a_m a_n -a_{m+n-1}| +|a_m a_n|  \le 1\,.
$$
This is equivalent to the following statement:
$$
 |\la a_m a_n -a_{m+n-1}| \leq \max\{1, |1-\la |\}\,, \quad \text{for all} \quad \la\in\C\,.
$$
Equality holds in both inequalities for the function given by
\begin{equation}\label{C}
 f(z) =\frac{z}{1-z}
\end{equation}
when $|1-\la|\geq1$ and for
\begin{equation}\label{C-extr}
 f(z) =\frac{z}{1-z^{m+n-2}}
\end{equation}
when $|1-\la |<1$.
\end{thm}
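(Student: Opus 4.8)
The plan is to exploit the Marx--Strohh\"acker representation recorded just above: $f\in\overline{{\rm co}(C)}$ if and only if $g\in\cp$, where $g(z)=2f(z)/z-1$. Writing $g(z)=1+\sum_{k\ge1}p_kz^k$ and matching Taylor coefficients, this correspondence forces $a_k=\tfrac12 p_{k-1}$ for every $k\ge2$, hence $a_ma_n=\tfrac14 p_{m-1}p_{n-1}$ and $a_{m+n-1}=\tfrac12 p_{m+n-2}$. First I would record the index choice $N=m+n-2$, $k=m-1$ (so that $N-k=n-1$); the hypothesis $m,n\ge2$ gives $1\le k\le N-1$, exactly the range in which Theorem~\ref{other-ie} and Proposition~\ref{prop-known-gen} apply.

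For the single inequality I would substitute these indices into Proposition~\ref{prop-known-gen}, namely $|p_{m+n-2}-\tfrac12 p_{m-1}p_{n-1}|+\tfrac12|p_{m-1}p_{n-1}|\le2$, and then rewrite the $p$'s via $p_{m-1}p_{n-1}=4a_ma_n$ and $p_{m+n-2}=2a_{m+n-1}$; dividing by $2$ collapses it at once to $|a_ma_n-a_{m+n-1}|+|a_ma_n|\le1$. For the $\la$-form I would use the identity $\la a_ma_n-a_{m+n-1}=-\tfrac12\bigl(p_{m+n-2}-\tfrac{\la}{2}p_{m-1}p_{n-1}\bigr)$ and apply Theorem~\ref{other-ie} with weight $w=\la/2$, so that $1-2w=1-\la$; taking absolute values yields $|\la a_ma_n-a_{m+n-1}|\le\max\{1,|1-\la|\}$ for every $\la\in\C$. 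The equivalence of the two displayed statements is then not a separate miracle: substituting $\mu=1-\la$ turns the $\la$-form into $|(a_ma_n-a_{m+n-1})-\mu\,a_ma_n|\le\max\{1,|\mu|\}$, and Lemma~\ref{Yakub} with $C=M=1$ converts this into the single inequality, exactly as in the proof of Proposition~\ref{prop-known-gen}.

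What remains is sharpness, which I expect to be the only delicate point. For $f(z)=z/(1-z)$ one has $g(z)=(1+z)/(1-z)$, so $p_k\equiv2$ and $a_k\equiv1$, whence the functional equals $|1-\la|$, matching the bound precisely when $|1-\la|\ge1$. For $f(z)=z/(1-z^{m+n-2})$ I must first confirm membership in $\overline{{\rm co}(C)}$, which reduces to ${\rm Re}\bigl(1/(1-z^{m+n-2})\bigr)>1/2$ on $\D$, i.e.\ to $g(z)=(1+z^{m+n-2})/(1-z^{m+n-2})\in\cp$; the associated coefficients satisfy $p_k=2$ exactly when $(m+n-2)\mid k$, so since $1\le m-1,\,n-1<m+n-2$ both $p_{m-1}$ and $p_{n-1}$ vanish, giving $a_ma_n=0$ and $a_{m+n-1}=1$, and the functional equals $1$, matching the bound when $|1-\la|<1$. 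The main obstacle is thus purely the verification that this second candidate lies in the class and realizes equality simultaneously in both forms; everything else is the coefficient dictionary plus a single substitution.
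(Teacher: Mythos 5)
Your proposal is correct and follows essentially the same route as the paper: the Marx--Strohh\"acker correspondence $g=2f/z-1\in\cp$ with $p_{k-1}=2a_k$, Theorem~\ref{other-ie} (with $w=\la/2$) for the $\la$-form, Proposition~\ref{prop-known-gen} for the single inequality, Lemma~\ref{Yakub} for the equivalence, and the same two extremal functions. You merely spell out the index bookkeeping and the $\mu=1-\la$ substitution more explicitly than the paper does; no substantive difference.
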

\begin{proof}
The function $g$ given by
$$
 g(z)=2\frac{f(z)}{z}-1=1+\sum_{n=1}^\infty p_n z^n\,, \quad z\neq 0\,, \qquad g(0)=1\,,
$$ belongs to $\cp$ and the coefficients of the functions $f$ and $g$ are related by $p_{n-1}=2a_n$. Theorem~\ref{other-ie} yields the desired inequality in $\la$ and the equivalent formulation as a single inequality follows by Lemma~\ref{Yakub}.
\par
The function given by \eqref{C} clearly yields equality in the cases indicated. For the remaining case, when $|1-\la |<1$, we consider the function
$$
g(z) = \frac{1+z^{m+n-2}}{1-z^{m+n-2}}\,,
$$
which belongs to $P$. Let $f$ be the function in $\overline{{\rm co}(C)}$ for which $g(z)=2f(z)/z-1$. We see that
$$
 f(z)=\frac{z}{1-z^{m+n-2}}=\sum_{k=0}^\infty z^{k(m+n-2)+1}
$$
and equality is attained for this function.
\end{proof}
\par\medskip
\textbf{A further generalization}. More generally, one can consider the class $C(\a)$ of analytic functions in $\D$ of the form $f(z)=z+a_2 z^2+ a_3 z^3+\ldots$ which satisfy
$$
 {\rm Re} \left( 1+ \frac{zf^{\prime\prime}}{f^\prime}\right) > \a.
$$
This class was introduced by Robertson in \cite[Section~3]{Ro36}, \textit{cf.} also \cite[\S 2.3]{GK}. Of course, $C(0) = C$, the class of convex functions. Since these classes become smaller as $\a$ increases, all functions in $C(\a)$ are univalent and convex whenever $0\le\a<1$. When $-1/2\le\a<0$ these functions are known to be univalent and convex in one direction \cite{U52}.
\par
The function given by
$$
 f_\a(z)=
\begin{cases}
 \frac{1-(1-z)^{2\a-1}}{2\a-1}, \quad \text{for} \; \; \a\neq1/2, \\[\jot]
 \log\frac{1}{1-z}, \quad\quad \text{for} \; \; \a=1/2,
\end{cases}
$$
is often extremal in this class. Its coefficients are easily computed: $$
 A_n = \frac{\G(n+1-2\a)}{n! \, \G(2-2\a)} \, = \, \frac{1}{n!}\prod_{k=2}^n(k-2\a).
$$
It is known that $|a_n|\leq A_n$ for functions in $C(\a)$ (see \cite{Ro36} for $0\le \a<1$ and \cite{Su05} for starlike functions of any order $\a<1$, which directly implies the inequality considered here through Alexander's Theorem).
\par
Arguments similar to those used earlier allow us to recover, without much effort, a recent theorem from \cite{LPQ}; thus, we omit several details below.
\par\medskip
\begin{thm} Let $\a<1$, $f \in \overline{{\rm co}(C(\a))}$,  $m,n\ge2$, and $A_n$ as above. Then
$$
 \left|\frac{a_m a_n}{A_m A_n} - \frac{a_{m+n-1}}{A_{m+n-1}}\right| + \frac{|a_m a_n|}{A_m A_n} \le 1\,.
$$
This is equivalent to the following statement:
$$
 |\la a_m a_n -a_{m+n-1}| \leq \max\{A_{m+n-1},|\la A_m A_n -A_{m+n-1}|\}\,, \quad \text{for all} \quad \la\in\C\,.
$$
Equality holds in both inequalities above for the function given by $f = f_\a$ in the case when $|\la A_m A_n -A_{m+n-1}| \ge A_{m+n-1}$ and for the function
$$
f(z) = \frac{1}{m+n-2}\sum_{k=1}^{m+n-2} e^{-\frac{2\pi k i}{m+n-2}} f_\a\left(e^\frac{2\pi k i}{m+n-2} z\right)
$$
in the case when $|\la A_m A_n -A_{m+n-1}| < A_{m+n-1}$.
\end{thm}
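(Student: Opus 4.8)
The plan is to reduce the whole statement to the class $\cp$, exactly as in the proof of Theorem~\ref{thm-conv}, the only new ingredient being the normalization dictated by the coefficients $A_n$. First I would invoke the representation of the closed convex hull: the extreme points of $\overline{{\rm co}(C(\a))}$ are the rotations $k_x(z) = \frac1x f_\a(xz) = \sum_{n=1}^\infty A_n x^{n-1} z^n$ with $|x|=1$, so that every $f\in\overline{{\rm co}(C(\a))}$ can be written as $f(z) = \int_{|x|=1} k_x(z)\,d\mu(x)$ for some probability measure $\mu$ on the unit circle. Reading off coefficients gives $a_n = A_n \int_{|x|=1} x^{n-1}\,d\mu(x)$; since $\a<1$ forces every factor $k-2\a>0$ for $k\ge2$, each $A_n$ is a positive real and the quotients $a_n/A_n$ are well defined.

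Next I would package these quotients into a Carath\'eodory function. By the Herglotz representation, $g(z) = \int_{|x|=1}\frac{1+xz}{1-xz}\,d\mu(x) = 1 + 2\sum_{k\ge1}\big(\int x^k\,d\mu\big)z^k$ belongs to $\cp$, and its coefficients satisfy $p_{n-1}=2a_n/A_n$. This is the precise analogue of the relation $p_{n-1}=2a_n$ used for $\overline{{\rm co}(C)}$, to which it reduces when $\a=0$ (where $A_n\equiv1$). Equivalently, one may phrase the input as a convolution characterization: $f\in\overline{{\rm co}(C(\a))}$ iff $\mathrm{Re}\,(h(z)/z)>1/2$ for $h(z)=z+\sum_{n=2}^\infty (a_n/A_n)z^n$.

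With this dictionary the two inequalities are immediate. Applying Proposition~\ref{prop-known-gen} with the index $m+n-2$ and split point $m-1$ (legitimate since $m,n\ge2$ give $1\le m-1\le m+n-3$) and substituting $p_{m+n-2}=2a_{m+n-1}/A_{m+n-1}$, $p_{m-1}=2a_m/A_m$, $p_{n-1}=2a_n/A_n$ yields, after dividing by $2$, the first displayed inequality. For the equivalent $\la$-statement I would feed the same indices into Theorem~\ref{other-ie} and set $w=\la A_m A_n/(2A_{m+n-1})$; then $1-2w=(A_{m+n-1}-\la A_m A_n)/A_{m+n-1}$, and multiplying the resulting estimate by $A_{m+n-1}$ produces exactly $|\la a_m a_n - a_{m+n-1}|\le\max\{A_{m+n-1},|\la A_m A_n - A_{m+n-1}|\}$. (Alternatively, the equivalence of the two displays follows from Lemma~\ref{Yakub} with $M=C=1$ after the changes of variable $t=\la A_m A_n/A_{m+n-1}$ and $s=t-1$.) The equality regime $|1-2w|\ge1$ becomes precisely $|\la A_m A_n - A_{m+n-1}|\ge A_{m+n-1}$, matching the statement.

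For equality I would trace back the extremal functions of Theorem~\ref{other-ie}. In the regime $|1-2w|\ge1$ the extremal $g$ is $(1+z)/(1-z)$, all of whose coefficients equal $2$; this corresponds to $\mu=\delta_1$ and hence to $a_n=A_n$, i.e.\ $f=f_\a$. In the regime $|1-2w|<1$ the extremal $g$ is $(1+z^{m+n-2})/(1-z^{m+n-2})$, which retains only the coefficients $a_n$ with $n\equiv1\pmod{m+n-2}$; the averaging $\mu=\frac1{m+n-2}\sum_k\delta_{\exp(2\pi ki/(m+n-2))}$ reproduces exactly the stated extremal function. The main obstacle is not any of these computations but the input I would borrow from the literature, namely the identification of the extreme points of $\overline{{\rm co}(C(\a))}$ (equivalently the convolution characterization above); this is the Marx--Strohh\"acker-type statement that replaces the one used in Theorem~\ref{thm-conv} and is where the sharp bound $|a_n|\le A_n$ and the representing measure $\mu$ genuinely enter.
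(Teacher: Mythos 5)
Your proposal is correct and follows essentially the same route as the paper: both reduce to the relation $a_n=\tfrac{A_n}{2}p_{n-1}$ for a function in $\cp$ and then apply Theorem~\ref{other-ie} and Proposition~\ref{prop-known-gen}. The only cosmetic difference is that the paper quotes the Herglotz-type representation $f'(z)=\int_\T (1-\la z)^{2\a-2}\,d\mu(\la)$ from \cite{BHMW} directly, whereas you phrase the same input as the extreme-point/rotation-average description of $\overline{{\rm co}(C(\a))}$ — these are equivalent since $f_\a'(z)=(1-z)^{2\a-2}$.
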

\begin{proof}
Theorem~4 in \cite{BHMW} provides the following Herglotz-type representation: there exists a probability measure $\mu$ on $\T$ such that
$$
f^\prime (z) = \int_\T \frac{d\mu(\la)}{(1-\la z)^{2-2\a}}
$$
for every $f \in \overline{{\rm co}(C(\a))}$. From here the coefficients of such $f$ relate with those of a function in $\cp$ by
$$
 a_n = \frac{A_n}{2} p_{n-1}.
$$
The desired result now follows from Theorem~\ref{other-ie} and Proposition~\ref{prop-known-gen} as before.
\end{proof}
\par\medskip
\textbf{The closed convex hull of starlike functions.} A set $E$ is said to be \textit{starlike\/} with respect to the origin if for every $z\in E$ the entire segment $[0,z]$ is contained in $E$. A function $f$ is said to be starlike if it is a univalent function of the disk onto a domain starlike with respect to the origin. The usual notation for the subclass of $S$ consisting of all starlike functions is $S^*$.  Obviously, $C\subset S^*\subset S$.
\par
Brown and Tsao \cite[Theorem~2]{BT} showed that the Zalcman conjecture is true for starlike functions and Ma \cite[Theorem~2.3]{M2}  generalized their result further to show that
$$
 |\la a_m a_n -a_{m+n-1}| \le \la\,m n -m -n +1
$$
whenever $\la\in\R$ and $\la\ge \la_0= \frac{2(m+n-1)}{mn}$. The following result generalizes his result to the case of complex parameters and at the same time answers in the affirmative his question posed in \cite{M2} as to whether $\la_0$ is the smallest positive number for which the above bound remains true.
\par\smallskip
\begin{thm} \label{thm-starl}
Let $f \in \overline{{\rm co}(S^*)}$ and $m,n\geq2$. Then
$$
 \left|\frac{a_m a_n}{mn} - \frac{a_{m+n-1}}{m+n-1}\right| + \frac{|a_m a_n|}{mn} \le 1\,.
$$
This statement is equivalent to
$$
 |\la a_m a_n -a_{m+n-1}| \leq (m+n-1) \max\left\{1, \left|1-\frac{mn}{m+n-1}\la\right|\right\}\,, \quad \mbox{for all} \ \  \la\in\C\,.
$$
In both cases, equality holds for the function given by
\begin{equation}\label{S}
 f(z) =\frac{z}{(1-z)^2}
\end{equation}
when $\left|1-\frac{mn}{m+n-1}\la\right|\geq1$ and for
\begin{equation}\label{S-extr}
f(z) =\frac{z}{1-z^{m+n-2}} + (m+n-2)\frac{z^{m+n-1}}{(1-z^{m+n-2})^2}
\end{equation}
when $\left|1-\frac{mn}{m+n-1}\la\right|<1$.
\end{thm}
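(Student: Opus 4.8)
The plan is to reduce everything to the class $\cp$ exactly as in the proofs of Theorem~\ref{thm-conv} and of the $C(\a)$ result, the only new ingredient being the correct coefficient relation for $\overline{{\rm co}(S^*)}$. The extreme points of $\overline{{\rm co}(S^*)}$ are known to be the rotations of the Koebe function $k_\eta(z)=z/(1-\eta z)^2$, $|\eta|=1$, so every $f\in\overline{{\rm co}(S^*)}$ admits a representation $f(z)=\int_\T z(1-\eta z)^{-2}\,d\mu(\eta)$ for some probability measure $\mu$ on $\T$. Associating to $\mu$ the function $g(z)=\int_\T \frac{1+\eta z}{1-\eta z}\,d\mu(\eta)$, which lies in $\cp$ since it is a probability average of half-plane maps, and comparing Taylor coefficients ($k_\eta$ has $n$-th coefficient $n\eta^{n-1}$, while $g$ has $p_j=2\int_\T\eta^j\,d\mu$) yields the key relation $a_n=\frac{n}{2}\,p_{n-1}$.

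First I would substitute this relation into the target expressions. A direct computation gives $\frac{a_m a_n}{mn}=\frac14 p_{m-1}p_{n-1}$ and $\frac{a_{m+n-1}}{m+n-1}=\frac12 p_{m+n-2}$, so the left-hand side of the first inequality equals $\frac12\big(|p_{m+n-2}-\frac12 p_{m-1}p_{n-1}|+\frac12|p_{m-1}p_{n-1}|\big)$; applying Proposition~\ref{prop-known-gen} with the index choice $k=m-1$ and $n\mapsto m+n-2$ (legitimate since $m,n\ge2$ force $1\le m-1\le m+n-3$) bounds this by $\frac12\cdot2=1$. For the equivalent $\la$-statement I would instead feed Theorem~\ref{other-ie} the same indices with $w=\frac{mn}{2(m+n-1)}\la$: writing $\la a_m a_n-a_{m+n-1}=\frac{m+n-1}{2}\big(\frac{mn}{2(m+n-1)}\la\,p_{m-1}p_{n-1}-p_{m+n-2}\big)$ and noting that $1-2w=1-\frac{mn}{m+n-1}\la$ produces exactly the claimed bound $(m+n-1)\max\{1,|1-\frac{mn}{m+n-1}\la|\}$. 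Lemma~\ref{Yakub} then certifies that the two formulations are equivalent.

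For sharpness I would trace the two extremal members of $\cp$ through this correspondence. The choice $g(z)=(1+z)/(1-z)$ (all $p_j=2$) forces $a_n=n$, i.e. the Koebe function \eqref{S}, and saturates Theorem~\ref{other-ie} precisely when $|1-2w|\ge1$, that is $|1-\frac{mn}{m+n-1}\la|\ge1$. The choice $g(z)=(1+z^{m+n-2})/(1-z^{m+n-2})$, which has $p_j=2$ when $(m+n-2)\mid j$ and $p_j=0$ otherwise, handles the complementary case; unwinding $a_n=\frac n2 p_{n-1}$ gives $f(z)=z+\sum_{k\ge1}(k(m+n-2)+1)z^{k(m+n-2)+1}$, and an elementary series computation identifies this with the function \eqref{S-extr}.

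Finally, to answer Ma's question I would specialize to real $\la>0$. When $\la\ge\la_0=\frac{2(m+n-1)}{mn}$ one has $1-\frac{mn}{m+n-1}\la\le-1$, so the maximum equals $\frac{mn}{m+n-1}\la-1$ and the bound collapses to $\la mn-m-n+1$, recovering Ma's inequality. When $0<\la<\la_0$ the maximum equals $1$, so the sharp bound is $m+n-1$; since the extremal function \eqref{S-extr} has $a_m=a_n=0$ but $a_{m+n-1}=m+n-1$ (as $m,n\ge2$ keep $m$ and $n$ out of the arithmetic progression of exponents while $m+n-1$ lies in it), it attains $|\la a_m a_n-a_{m+n-1}|=m+n-1>\la mn-m-n+1$ for every such $\la$, so $\la_0$ is indeed the smallest positive parameter for which Ma's estimate survives. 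The main obstacle I anticipate is not any of these computations but justifying the representation step cleanly, namely invoking the extreme-point description of $\overline{{\rm co}(S^*)}$ and confirming that the induced $g$ genuinely lies in $\cp$; once that is in place, everything else parallels the earlier theorems.
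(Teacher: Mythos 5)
Your proof is correct and, at its core, follows the same route as the paper: reduce to the class $\cp$ via the coefficient relation $a_n=\tfrac n2 p_{n-1}$, apply Theorem~\ref{other-ie} and Proposition~\ref{prop-known-gen} with the indices $k=m-1$, $n\mapsto m+n-2$, certify the equivalence of the two formulations by Lemma~\ref{Yakub}, and exhibit sharpness through the two members $(1+z)/(1-z)$ and $(1+z^{m+n-2})/(1-z^{m+n-2})$ of $\cp$. The only point where you genuinely diverge is the justification of the relation $a_n=\tfrac n2 p_{n-1}$: the paper obtains it by Alexander's theorem, writing $f=zh'$ with $h\in\overline{{\rm co}(C)}$ (noting this is preserved under convex combinations and locally uniform limits) and then using the Marx--Strohh\"acker link $h\mapsto 2h(z)/z-1\in\cp$, whereas you invoke the Brickman--MacGregor--Wilken extreme-point description to write $f(z)=\int_\T z(1-\eta z)^{-2}\,d\mu(\eta)$ and pair it with the Herglotz integral $g(z)=\int_\T\frac{1+\eta z}{1-\eta z}\,d\mu(\eta)$. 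Both are legitimate; your route needs the (citable, e.g.\ \cite{BHMW}) representation theorem but makes the correspondence $f\leftrightarrow g$ and the computation of $p_{n-1}=2\int_\T\eta^{n-1}d\mu=2a_n/n$ completely explicit, while the paper's route is self-contained modulo Alexander's theorem and reuses the machinery it already set up for $\overline{{\rm co}(C)}$. Your closing discussion of Ma's question (that the second extremal function has $a_m=a_n=0$, $a_{m+n-1}=m+n-1$, so the bound $m+n-1$ beats $\la mn-m-n+1$ for $0<\la<\la_0$) is a correct account of why $\la_0$ is optimal, matching the remarks surrounding the theorem.
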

\begin{proof}
By Alexander's Theorem \cite[\S2.5]{Du} we know that every starlike function $f$ is of the form $f(z)= z h^\prime(z)$ for some $h$ in $C$. Such a relation is preserved upon taking convex combinations and uniform limits on compact subsets of the disk, hence we obtain the same conclusion for every function $f$ in $\overline{{\rm co}(S^*)}$ and some corresponding $h$ in $\overline{{\rm co}(C)}$. Next, taking into account the connection between the classes $\overline{{\rm co}(C)}$ and $\cp$ we readily get that for every $f$ in $\overline{{\rm co}(S^*)}$ there is a function $g$ in $\cp$ such that
$$
 f(z) = zh^\prime(z) = z \left(\frac{z+zg(z)}{2}\right)^\prime = \frac{z}{2} \big(1+g(z)+zg^\prime(z)\big)\,,
$$
Writing $g(z)=1+ \sum_{n=1}^\infty p_n z^n$ we can easily deduce that $a_n =~\frac{n p_{n-1}}{2}, \, n\geq 2$. Now, Theorem~\ref{other-ie} yields the first and Proposition~\ref{prop-known-gen} the second of the two inequalities.
\par
We note that the Koebe function \eqref{S} clearly satisfies the equality in the cases indicated. For the remaining case, when $\left|1-\frac{mn}{m+n-1}\la\right|<1$, we consider the function
$$
g(z) = \frac{1+z^{m+n-2}}{1-z^{m+n-2}}\,,
$$
which belongs to $\cp$. Hence, in view of the above computation, the function $f=\frac{z}{2}(1+g+zg')$ belongs to $\overline{{\rm co}(S^*)}$ and has the form \eqref{S-extr}. We now compute
$$
f(z) = z+ \sum_{k=1}^\infty \big(k(m+n-2)+1\big) z^{k(m+n-2)+1}\,.
$$
Clearly, equality is attained for this function in both inequalities.
\end{proof}
\par\medskip
The class $\overline{{\rm co}(S^*)}$ is obviously strictly larger than $S^*$ and it turns out that, in the simplest case $\la=1$, the above Theorem~\ref{thm-starl} yields the sharp bound
$$
  |a_m a_n -a_{m+n-1}| \leq \max \{m+n-1,(m-1)(n-1)\}\,,
$$
which is different from $(m-1)(n-1)$ when either $m=2$ or $m=n=3$; this is explained in \cite{M2}. In particular, when $m=n\in\{2,3\}$ we have the estimate $|a_n^2-a_{2n-1}|\le 2n-1$. In this case, $2n-1>(n-1)^2$, the general estimate in the Zalcman conjecture (also confirmed by Brown and Tsao for starlike functions). However, there is no contradiction since the class $\overline{{\rm co}(S^*)}$ also contains non-univalent functions.
\par\medskip
\section{Some general considerations} \label{sect-general}
\par
\textbf{An asymptotic version of the Zalcman conjecture}. Let $f\in S$ and $M_\infty (r,f)=\max_{|z|=r} |f(z)|$. Recall that the \textit{Hayman index\/} of $f$ is the number
$$
 \a=\lim_{r\to 1} (1-r)^2 M_\infty (r,f)\,.
$$
It is well known \cite[p.~157]{Du} that $0\le\a\le 1$. Moreover, Hayman's regularity theorem \cite[Theorem~5.6]{Du} asserts that for each $f$ in $S$ its $n$-th Taylor coefficient $a_n$ satisfies $\lim_{n\to\infty} |a_n/n|=\a$.
\par
Even though the Zalcman conjecture continues to be open problem, we now show that its asymptotic version is true and we give it in a precise quantitative form.
\begin{thm} \label{thm-haym}
Let $f(z)=z+a_2 z^2+\ldots$ be in $S$, with Hayman index $\a$, and let $\la \in\C$. Then
\begin{equation}
 \lim_{m, n\rightarrow\infty} \frac{|\la  a_m a_n-a_{m+n-1}|}{|\la  mn -m-n+1|} = \alpha^2\,.
\label{haym}
\end{equation}
Also, if we define $B_{m,n} (\la ) = \sup_{f\in S}|\la  a_m a_n-a_{m+n-1}|$, then
$$
\lim_{m, n\rightarrow\infty} \frac{B_{m,n}(\la )}{|\la  mn -m-n+1|} = 1.
$$
In both limits, we understand that $(m, n)\rightarrow (\infty,\infty)$ unconditionally in $\N^2$ (meaning that $m+n\to\infty$).
\end{thm}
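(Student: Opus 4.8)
The plan is to reduce both assertions to two elementary inputs already recorded above: Hayman's regularity theorem in the form $|a_k|/k\to\alpha$ as $k\to\infty$, and the Bieberbach--de~Branges bound $|a_k|\le k$, supplemented by the explicit behaviour of the rotated Koebe functions $k_\theta(z)=z/(1-e^{i\theta}z)^2$, whose coefficients $a_k=k\,e^{i(k-1)\theta}$ make the functional collapse to a single factor. The structural point I would isolate first is that for these rotations
$$\lambda a_m a_n - a_{m+n-1} = e^{i(m+n-2)\theta}\bigl(\lambda mn - m - n + 1\bigr),$$
so the normalising quantity $\lambda mn-m-n+1$ is precisely the value of the functional on the extremal family. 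This yields at once the lower bound $B_{m,n}(\lambda)\ge|\lambda mn-m-n+1|$, hence $B_{m,n}(\lambda)/|\lambda mn-m-n+1|\ge1$ for all $m,n$.

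For the first identity I would fix $f$ and write
$$\bigl|\,|\lambda a_m a_n| - |a_{m+n-1}|\,\bigr| \le |\lambda a_m a_n - a_{m+n-1}| \le |\lambda a_m a_n| + |a_{m+n-1}|,$$
then divide through by $|\lambda mn-m-n+1|$. Here $|\lambda a_m a_n| = |\lambda|\,mn\,(|a_m|/m)(|a_n|/n)$, whose Hayman factors tend to $\alpha^2$, while $|\lambda|mn/|\lambda mn-m-n+1|\to1$; the remaining term obeys $|a_{m+n-1}|/|\lambda mn-m-n+1|\le(m+n-1)/|\lambda mn-m-n+1|$. Thus both sides squeeze to $\alpha^2$ exactly when the linear contribution is negligible, i.e. when $(m+n-1)=o(mn)$. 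I would record here that at $\lambda=0$ the functional degenerates and the ratio equals $|a_{m+n-1}|/(m+n-1)\to\alpha$, which agrees with $\alpha^2$ only in the extremal cases $\alpha\in\{0,1\}$; so the first identity is to be read for $\lambda\neq0$, the regime in which the quadratic term dominates.

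For the supremum I would combine the rotated-Koebe lower bound with the crude estimate $|\lambda a_m a_n - a_{m+n-1}| \le |\lambda|\,mn + (m+n-1)$, valid for every $f\in S$ by $|a_k|\le k$. Dividing by $|\lambda mn-m-n+1|$ and using again that $(m+n-1)=o(mn)$, the ratio is trapped between $1$ and $(|\lambda|mn+m+n-1)/|\lambda mn-m-n+1|\to1$, which forces the second limit; the value $\lambda=0$ is an exact equality, since $B_{m,n}(0)=\sup_f|a_{m+n-1}|=m+n-1=|\lambda mn-m-n+1|$. Note that this argument needs no information about the arguments of the coefficients, only their moduli and the sup-achieving rotated Koebe family, so it runs uniformly in $\lambda$.

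The main obstacle — and the only place where the two displayed limits are genuinely delicate — is the precise meaning of $(m,n)\to(\infty,\infty)$. Every estimate above rests on $(m+n-1)=o(mn)$, that is on $mn/(m+n)\to\infty$, which is equivalent to both indices tending to infinity; this is the content I would attach to the unconditional limit. I would be most careful in the regime where one index, say $m$, stays bounded while $n\to\infty$: there the quadratic term $\lambda a_m a_n$ no longer overwhelms the linear term $a_{m+n-1}$, the Hayman control (which pins down only the moduli $|a_k|/k$, the arguments of $a_k$ being free to oscillate when $0<\alpha<1$) is no longer enough to settle the first ratio at $\alpha^2$, and the normalisation $\lambda mn-m-n+1$ can even fail to grow (when $\lambda m=1$). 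Making explicit that the operative hypothesis is $mn/(m+n)\to\infty$, and thereby clarifying in which sense the limit is taken as $m+n\to\infty$, is the step I would set out with the greatest care, since it is exactly this condition that separates the transparent regime from the bounded-index one.
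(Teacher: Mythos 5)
Your proof is correct and follows essentially the same route as the paper's: the first limit by the triangle-inequality squeeze combined with Hayman's regularity theorem $|a_k|/k\to\alpha$, and the second by the (rotated) Koebe lower bound $B_{m,n}(\lambda)\ge|\lambda mn-m-n+1|$ together with the de Branges bound $|a_k|\le k$. Your two caveats are in fact sharper than the paper's own statement and worth keeping: at $\lambda=0$ the first ratio tends to $\alpha$ rather than $\alpha^2$ (so the first limit must be read for $\lambda\neq0$ unless $\alpha\in\{0,1\}$), and both limits genuinely require $\min(m,n)\to\infty$, equivalently $mn/(m+n)\to\infty$, so the paper's parenthetical gloss ``meaning that $m+n\to\infty$'' must not be read as allowing one index to stay bounded --- your example $\lambda m=1$ with $m$ fixed, where the normalization $|\lambda mn-m-n+1|$ stays bounded, shows the claim would then fail.
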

\begin{proof}
Applying the triangle inequality we get
$$
\frac{|\la  a_m a_n-a_{m+n-1}|}{|\la  mn -m-n+1|} \leq \frac{|a_m a_n|}{m n} \frac{|\la | m n}{|\la  mn -m-n+1|} + \frac{|a_{m+n-1}|}{m+n-1} \frac{m+n-1}{|\la  mn -m-n+1|},
$$
where the right-hand side converges to $\alpha^2$ in view of Hayman's regularity theorem. Analogously, we can use the triangle inequality to get a lower bound converging to $\alpha^2$. Hence \eqref{haym} follows.
\par
The Koebe function clearly shows that $B_{m,n}(\la ) \geq |\la \,mn -m-n+1|$. Using the customary notation $A_n=\sup_{f\in S} |a_n|$, we have
$$
1 \leq \frac{B_{m,n}(\la )}{|\la \,mn -m-n+1|} \leq \frac{|\la | A_m A_n+A_{m+n-1}}{|\la \,mn -m-n+1|} \to 1\,,
$$
when $(m, n)\rightarrow (\infty,\infty)$.
\end{proof}
\par
As is usual \cite[Chapter~2]{Du}, by a \textit{rotation\/} of a function $f$ in $S$ we mean the function $f_c (z)=\overline{c} f(c z)$, $|c|=1$, which is again in $S$. Note that the rotations of the Koebe function give equality in Zalcman's conjecture.
\begin{cor}
If $f\in S$ is not a rotation of the Koebe function, then for every $\d\in (0,1-\alpha^2)$ there exist $m_0$ and $n_0$ in $\mathbb{N}$ (which depend on $f$) such that
$$
|\la  a_m a_n-a_{m+n-1}|\leq (1-\delta)|\la  mn -m-n+1|\,,
$$
for all $m\geq m_0$, $n\geq n_0$.
\end{cor}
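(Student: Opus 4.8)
The plan is to deduce the corollary directly from the asymptotic identity \eqref{haym} in Theorem~\ref{thm-haym}, using the fact that a non-rotation of the Koebe function has Hayman index strictly less than one. The key observation is that the limit in \eqref{haym} equals $\alpha^2$, and the hypothesis that $f$ is not a rotation of the Koebe function forces $\alpha<1$, hence $\alpha^2<1-\delta$ would hold for $\delta$ in the stated range. More precisely, I first recall the characterization of the Hayman index: it is a classical fact \cite[Theorem~5.7]{Du} that $\alpha=1$ if and only if $f$ is a rotation of the Koebe function. Therefore, under our hypothesis we have $0\le\alpha<1$, so $\alpha^2<1$ and the interval $(0,1-\alpha^2)$ is nonempty, which makes the quantifier over $\delta$ meaningful.

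\textbf{Second}, I would fix such a $\delta\in(0,1-\alpha^2)$ and unwind the meaning of the limit \eqref{haym}. Since
$$
\lim_{m,n\to\infty}\frac{|\la a_m a_n-a_{m+n-1}|}{|\la mn-m-n+1|}=\alpha^2<\alpha^2+\delta<1,
$$
the definition of the unconditional limit over $\N^2$ provides, for the threshold value $\alpha^2+\delta$, indices $m_0,n_0$ (depending on $f$, on $\la$, and on $\delta$) such that the ratio stays below $\alpha^2+\delta$ for all $m\ge m_0$ and $n\ge n_0$. Rearranging this bound gives exactly
$$
|\la a_m a_n-a_{m+n-1}|\le(\alpha^2+\delta)\,|\la mn-m-n+1|.
$$

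\textbf{Third}, I would reconcile the constant: the statement requires the factor $(1-\delta)$, whereas the direct argument produces $(\alpha^2+\delta)$. Since $\alpha^2+\delta<1$ precisely when $\delta<1-\alpha^2$, we have $\alpha^2+\delta\le 1-\delta'$ for a suitable $\delta'>0$; thus the corollary as stated follows after a harmless relabelling of the error parameter (or equivalently, since $\alpha^2<1-\delta$ already gives $\alpha^2+\delta'\le 1-\delta$ for small enough auxiliary shift). In the write-up I would simply observe that, given the target $(1-\delta)$ with $\delta\in(0,1-\alpha^2)$, the quantity $1-\delta$ strictly exceeds $\alpha^2$, so applying the limit with this strictly larger threshold yields the desired eventual inequality for some $m_0,n_0$.

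\textbf{The main obstacle}—really the only substantive point—is the clean invocation of the rigidity fact that $\alpha=1$ characterizes rotations of the Koebe function; everything else is an $\varepsilon$-$N$ translation of the already-proven limit. I would make sure to cite the appropriate theorem from \cite{Du} for this characterization and to emphasize that the unconditional nature of the limit (as $m+n\to\infty$) is what legitimizes extracting a single pair $(m_0,n_0)$ beyond which the bound holds simultaneously for all larger $m$ and $n$.
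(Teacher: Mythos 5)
Your argument is correct and is precisely the derivation the paper intends (the corollary is stated without proof as an immediate consequence of Theorem~\ref{thm-haym}): the rigidity fact that the Hayman index satisfies $\a=1$ only for rotations of the Koebe function gives $\a^2<1$, and then the unconditional limit \eqref{haym} applied with the threshold $1-\d>\a^2$ yields the eventual bound. Your third paragraph's relabelling detour is unnecessary --- as you note at its end, one simply takes $\varepsilon=(1-\d)-\a^2>0$ in the definition of the limit --- and the indices $m_0,n_0$ also depend on $\la$ and $\d$, not only on $f$, but neither point affects the validity of the proof.
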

\par\medskip
\textbf{Some equivalent reformulations of the Zalcman conjecture}. For the sake of simplicity, we treat only the original conjecture: $|a_n^2-a_{2n-1}| \le (n-1)^2$. We first recall that, if assumed true for all $n$, it easily implies the Bieberbach conjecture (now de Branges' theorem). Since the proof of this implication for one value of $n$ uses the validity of the conjecture for another $n$, in order to avoid this discussion in the sequel, we shall simply take for granted the Bieberbach conjecture for odd integers: $|a_{2n-1}|\le 2n-1$. With this in mind, the Zalcman conjecture can be reformulated in several ways.
\par\smallskip
\begin{thm} \label{thm-eq}
Let $f\in S$ be fixed, $f(z)=z+\sum_{k=2}^\infty a_k z^k$, and let $n\ge 2$ be arbitrary. Then the following statements are equivalent:
\begin{itemize}
\item[(a)] The Zalcman conjecture holds: $|a_n^2-a_{2n-1}| \le (n-1)^2 = n^2-(2n-1)$;
\item[(b)] $|a_n^2-ta_{2n-1}| \le n^2-t(2n-1)$ \ for all $\,t\in [0,1]$;
\item[(c)] $|a_n^2-a_{2n-1}| + r |a_{2n-1}| \le (n-1)^2+r(2n-1)$ \ for all $\,r>0$;
\item[(d)] $|a_n^2-w a_{2n-1}| \le (n-1)^2+|w-1|(2n-1)$ \ for all $\,w\in\C$.
\end{itemize}
\end{thm}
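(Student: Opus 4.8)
The plan is to treat statement (a) as a hub and show that each of (b), (c), (d) is equivalent to it; transitivity of equivalence then closes the loop. Throughout I would abbreviate $u = a_n^2 - a_{2n-1}$ and $v = a_{2n-1}$, so that $a_n^2 = u + v$, and I would invoke the standing assumption — the Bieberbach bound for odd coefficients — in the single form $|v| \le 2n-1$. The whole argument then rests on nothing deeper than the triangle inequality together with this one bound; in particular Lemma~\ref{Yakub} is \emph{not} the right tool here, since the right-hand sides in (b)--(d) are additive (e.g.\ $(n-1)^2 + |w-1|(2n-1)$) rather than of the $\max$-form to which that lemma applies.

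For the three implications (b), (c), (d) $\Rightarrow$ (a) I would simply specialize the parameter. Statement (b) at $t=1$ and statement (d) at $w=1$ each read $|u| \le (n-1)^2$ on the nose. For (c) I would rearrange the inequality into $|u| - (n-1)^2 \le r\,(2n-1 - |v|)$ and let $r \to 0^+$; since the right-hand side tends to $0$, this forces $|u| \le (n-1)^2$. This limiting step is the only place where a little care is needed.

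For the reverse implications (a) $\Rightarrow$ (b), (c), (d) I would start from $|u| \le (n-1)^2$ and $|v| \le 2n-1$ and add suitably scaled copies of these two bounds. Statement (c) is immediate: multiplying $|v| \le 2n-1$ by $r>0$ and adding $|u| \le (n-1)^2$ gives $|u| + r|v| \le (n-1)^2 + r(2n-1)$. For (d) I would write $a_n^2 - w a_{2n-1} = u - (w-1)v$ and apply the triangle inequality,
\[
 |a_n^2 - w a_{2n-1}| \;\le\; |u| + |w-1|\,|v| \;\le\; (n-1)^2 + |w-1|(2n-1).
\]
Restricting $w$ to real values $t \in [0,1]$, so that $|w-1| = 1-t$, and using the identity $n^2 - t(2n-1) = (n-1)^2 + (1-t)(2n-1)$ then recovers (b).

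The point I would emphasize — rather than a genuine obstacle — is that the odd-coefficient Bieberbach bound $|v| \le 2n-1$ is used essentially in \emph{every} forward direction: it is precisely what guarantees that the surplus terms $r(2n-1)$, $(1-t)(2n-1)$ and $|w-1|(2n-1)$ dominate the corresponding contributions of $|v|$, and without it (a) would imply none of (b), (c), (d). The same remark explains why (b) must be stated on the segment $t\in[0,1]$ while (d) ranges over all of $\C$: the restriction $t\in[0,1]$ is exactly what makes $1-t\ge 0$, so that the surplus term has the correct sign.
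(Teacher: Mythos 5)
Your proof is correct and uses exactly the same ingredients as the paper's: the triangle inequality applied to the decomposition $a_n^2-wa_{2n-1}=(a_n^2-a_{2n-1})+(1-w)a_{2n-1}$ together with the standing bound $|a_{2n-1}|\le 2n-1$. The only difference is organizational --- you prove (a)\,$\Leftrightarrow$\,(b), (a)\,$\Leftrightarrow$\,(c), (a)\,$\Leftrightarrow$\,(d) with (a) as a hub (requiring the small extra step of letting $r\to 0^+$ in (c)\,$\Rightarrow$\,(a)), whereas the paper runs the cycle (b)\,$\Rightarrow$\,(a)\,$\Rightarrow$\,(c)\,$\Rightarrow$\,(d)\,$\Rightarrow$\,(b); the paper itself notes that other schemes are possible, so this is immaterial.
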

\begin{proof}
We will show that (b) $\Rightarrow$ (a) $\Rightarrow$ (c) $\Rightarrow$ (d) $\Rightarrow$ (b). Of course, other schemes of proof are also possible.
\par\medskip
\fbox{(b) $\Rightarrow$ (a)}\,. This implication is trivial. \par\medskip
\fbox{(a) $\Rightarrow$ (c)}\,. Suppose that (a) holds. In view of the inequality $|a_{2n-1}|\le 2n-1$, we deduce directly from (a) that
$$
 |a_n^2-a_{2n-1}| + r |a_{2n-1}| \le (n-1)^2+r(2n-1)
$$
for all $r>0$.
\par\medskip
\fbox{(c) $\Rightarrow$ (d)}\,. Suppose
$$
 |a_n^2-a_{2n-1}| + r |a_{2n-1}| \le (n-1)^2+r(2n-1)
$$
holds for all $\,r>0$ (hence, by taking limits, also for $r=0$). Let $w$ be arbitrary. If $w=1$ then (d) follows from the assumption for $r=0$. For every other value of $w$ there is a positive $r$ such that $|w-1|=r$ and we get
\begin{eqnarray*}
 |a_n^2-w a_{2n-1}| &=& |a_n^2-a_{2n-1} + (1-w) a_{2n-1}|
\\
 &\le & |a_n^2-a_{2n-1}|+ r |a_{2n-1}|
\\
 &\le & (n-1)^2+r(2n-1)
\\
 &=& (n-1)^2+ |w-1|(2n-1)\,,
\end{eqnarray*}
and (d) is proved.
\par\medskip
\fbox{(d) $\Rightarrow$ (b)}\,. This follows readily by taking $w=t\in [0,1]$.
\end{proof}
\par\medskip
Several remarks are in order to show that Theorem~\ref{thm-eq} may shed some new light on the problem.
\par\smallskip
$\bullet$ In view of Theorem~\ref{thm-eq}, proving the Zalcman conjecture amounts to proving any of the equivalent statements while disproving it would amount to finding one single example of a function which does not satisfy one of the inequalities (b), (c), or (d) for one single value of $t$, $r$, or $w$ respectively.
\par\smallskip
$\bullet$
Statement (b) in the theorem had already been verified for the typically real functions and follows from \cite[Theorem~1]{BT}.
\par\smallskip
$\bullet$ The fact that Bieberbach's conjecture is true means that (d) holds for $w=0$. If Zalcman's conjecture were to be true, we would have many more new inequalities such as, for example,
$$
 |a_n^2-2a_{2n-1}|\le n^2\,,
$$
obtained by taking $w=2$ in (d).
\par\smallskip
$\bullet$ We also note that the validity of Bieberbach's conjecture readily implies that (d) is true for any $w=-M$, where $M$ is real and positive; indeed:
$$
 |a_n^2 + M a_{2n-1}|\le n^2 + M (2n-1) = (n-1)^2 + (M+1)(2n-1)\,.
$$
However, we do not know whether (d) is true in general for \textit{any other\/} value of $w$ except for those in $(-\infty,0]$. So there appears to be a significant gap between Bieberbach and Zalcman.
\par\medskip
\textbf{Three related but weaker conjectures}. At this point it seems natural to formulate three closely related conjectures. They could be of interest since they are both weaker than Zalcman's but each of them also implies the Bieberbach conjecture.
\par
In relation to condition (b) of our preceding theorem, for a given value $t$ in $[0,1]$ we will denote by $(B_t)$ the following statement: \begin{equation}
 \tag{$B_t$}
 |a_n^2 -ta_{2n-1}| \le n^2 -t(2n-1)\,,
\end{equation}
for all $f\in S$ with $f(z)=z+\sum_{k=2}^\infty a_k z^k$, and all $n\ge 2$. Thus, we can formulate the \emph{first weak\/} version of the Zalcman conjecture as follows.
\par
\begin{conj} \label{conj1}
There exists $t\in(0,1]$ such that $(B_t)$ holds.
\end{conj}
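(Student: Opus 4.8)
The plan is to read $(B_t)$, for a single fixed value of $t$, as an extremal problem over all of $S$ and then to isolate the genuinely hard range of $n$. The first step is an elementary but useful observation: for each fixed $f\in S$ and each fixed $n$, the map $t\mapsto |a_n^2-t\,a_{2n-1}|-\big(n^2-t(2n-1)\big)$ is convex, being the sum of the modulus of a function affine in $t$ and a function affine in $t$. Its value at $t=0$ is $|a_n|^2-n^2\le0$ by de Branges' theorem \cite{dB}, and at $t=1$ it is $|a_n^2-a_{2n-1}|-(n-1)^2$, whose nonpositivity is exactly the Zalcman conjecture for this $n$. Hence, whenever the Zalcman conjecture is known for some $n$, convexity forces $(B_t)$ to hold for that $n$ and \emph{all} $t\in[0,1]$ at once. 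Since Krushkal \cite{K1,K2} verified the conjecture for small $n$, $(B_t)$ is already secured for those $n$, and Conjecture~\ref{conj1} reduces to exhibiting one $t\in(0,1]$ for which $(B_t)$ holds for all remaining large $n$.

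First I would look for slack coming from the size of $t$, but here one meets a structural obstruction. For the Koebe function and its rotations one computes $a_n^2-t\,a_{2n-1}=\big(n^2-t(2n-1)\big)c^{2n-2}$, so that $|a_n^2-t\,a_{2n-1}|=n^2-t(2n-1)$ for \emph{every} $t\in[0,1]$ and every $n\ge2$. Thus the Koebe family saturates $(B_t)$ identically, and $(B_t)$ is precisely the assertion that the Koebe function maximizes the functional $|a_n^2-t\,a_{2n-1}|$ over $S$. Worse, Theorem~\ref{thm-haym} shows that this extremal value is attained asymptotically to leading order, so there is no regime of $n$ in which a crude triangle-inequality estimate leaves any room: for the functional $|a_n^2-t\,a_{2n-1}|=t\,|t^{-1}a_n^2-a_{2n-1}|$ the supremum over $S$ is asymptotic to $n^2-t(2n-1)$, the target itself. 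The problem is therefore uniformly tight in both $t$ and $n$.

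Consequently a naive perturbation off the solved case $t=0$ cannot succeed on its own, since the Koebe extremal remains on the boundary for every $t$; what one must rule out is that some \emph{other} competitor overtakes it for small $t>0$. The approach I would pursue is a quantitative stability analysis near the Koebe map: one expects that a function whose coefficient $a_n$ is close to the extremal value $n$ is forced to have $a_{2n-1}$ nearly aligned in argument with $a_n^2$, which is the favorable sign in the functional, and the task is to turn this into a bound uniform in $n$. Concretely I would combine a second-variation computation along the Loewner equation at the Koebe slit mapping with the Grunsky-inequality machinery used by Krushkal, hoping that the factor $t<1$ brings the functional within reach of existing techniques even though the endpoint $t=1$ is the open Zalcman conjecture itself.

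The hard part will be exactly this uniform extremality statement. Because there is no value of $n$ and no admissible $t$ at which the Koebe competitor ceases to be tight, every estimate used must capture the cancellation between $a_n^2$ and $a_{2n-1}$ rather than merely their sizes; this is where all elementary bounds break down, and where the difficulty of Conjecture~\ref{conj1} coincides, in spirit if not in strength, with that of the Zalcman conjecture.
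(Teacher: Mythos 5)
This statement is labeled a \emph{Conjecture} in the paper, and the paper offers no proof of it: the authors explicitly say ``It is not clear in any obvious way that this statement is true,'' and the surrounding Theorem~\ref{B_t} only establishes logical relations (Zalcman $\Rightarrow$ Conjecture~\ref{conj3} $\Rightarrow$ Conjecture~\ref{conj2} $\Rightarrow$ Conjecture~\ref{conj1} $\Rightarrow$ Bieberbach) and an asymptotic, per-function version of $(B_t)$. Your proposal likewise does not prove the statement, and you say so yourself in the final paragraph; what you have written is a correct and reasonably sharp analysis of \emph{why} it is hard, not a proof. The gap is therefore the entire substance of the claim: after the reduction to large $n$ via Krushkal's low-order cases, the required uniform-in-$n$ extremality of the Koebe function for the functional $|a_n^2-t\,a_{2n-1}|$ is exactly the open problem, and the proposed ``second variation along the Loewner equation plus Grunsky machinery'' is a research programme, not an argument.

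That said, the partial observations you do make are correct and worth comparing with the paper. Your convexity remark --- that $t\mapsto |a_n^2-t\,a_{2n-1}|-\bigl(n^2-t(2n-1)\bigr)$ is convex, vanishes at most at $t=0$ by de~Branges, and hence is controlled on all of $[0,1]$ once Zalcman holds at $t=1$ --- is the pointwise version of the paper's observation that the set of admissible $t$ is closed and convex and contains $0$, and of the implication (a)~$\Rightarrow$~(b) in Theorem~\ref{thm-eq}; note the paper derives that implication from $|a_{2n-1}|\le 2n-1$ and the triangle inequality, whereas your route needs only $|a_n|\le n$, a minor but genuine difference. Your computation that the rotations of the Koebe function saturate $(B_t)$ for every $t\in[0,1]$, and your use of Theorem~\ref{thm-haym} to show $\sup_{f\in S}|a_n^2-t\,a_{2n-1}|\sim n^2-t(2n-1)$, are both correct and explain why no crude estimate can close the problem. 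Be careful, however, with the claim that the asymptotics leave ``no regime of $n$'' to exploit: Theorem~\ref{B_t}(e) and Theorem~\ref{thm-haym} give limits that are not uniform over $S$ (the first is per-function, the second concerns the supremum), so neither settles $(B_t)$ for any actual finite $n$, large or small. The conjecture remains open in the paper and in your proposal alike.
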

\par\noindent
It is not clear in any obvious way that this statement is true. However, $(B_0)$ is precisely the Bieberbach conjecture and we know it is true. Thus, the set of all $t\in [0,1]$ for which $(B_t)$ holds is non-empty. It is easy to see that this set is closed as the defining condition contains a non-strict inequality. It is also convex; indeed, if $(B_s)$ and $(B_t)$ hold and $\a$, $\b\in[0,1]$ with $\a+\b=1$ then clearly
\begin{align*}
 |a_n^2 - (\a s+\b t) a_{2n-1}| & \le \a |a_n^2 - s a_{2n-1}| + \b  |a_n^2 - t a_{2n-1}|
\\
 & \le \a (n^2 - s (2n-1)) + \b (n^2 - t (2n-1))
\\
 & =  n^2 - (\a s + \b t) (2n-1)\,,
\end{align*}
hence $(B_{\a s+\b t})$ is also true. Thus, it seems natural to consider the quantity $T = \sup \{t\in[0,1] \,\colon\, (B_t) \,\text{is true\,}\}$. With this notation, the Zalcman conjecture claims that $T\ge1$, while the weak Zalcman conjecture only claims that $T> 0$.
\par\medskip
Now consider the situation when condition (c) in Theorem~\ref{thm-eq} holds only for \emph{some} $r>0$. So for a fixed $r>0$ we can consider the statement $(C_r)$:
\begin{equation}
 \tag{$C_r$} |a_n^2-a_{2n-1}| + r |a_{2n-1}| \le (n-1)^2+r(2n-1)\,,
\end{equation}
for all $f\in S$ with $f(z)=z+ \sum_{k=2}^\infty a_k z^k$, and all $n\ge 2$. This clearly gives rise to the \emph{second weak\/} version of the Zalcman conjecture.
\par
\begin{conj} \label{conj2}
There exists $r\in [0,1]$ such that $(C_r)$ holds.
\end{conj}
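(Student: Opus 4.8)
The plan is to treat this existence statement by reducing the whole one-parameter family $(C_r)$ to a single inequality, and then to locate that inequality between the Bieberbach and Zalcman conjectures.

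First I would establish a monotonicity principle: taking the bound $|a_{2n-1}|\le 2n-1$ for granted as in the preceding discussion, if $(C_{r_0})$ holds then $(C_r)$ holds for every $r\ge r_0$. This is immediate from the decomposition $|a_n^2-a_{2n-1}|+r|a_{2n-1}| = (|a_n^2-a_{2n-1}|+r_0|a_{2n-1}|)+(r-r_0)|a_{2n-1}|$, since the nonnegative factor $r-r_0$ multiplies $|a_{2n-1}|\le 2n-1$ and thus contributes at most $(r-r_0)(2n-1)$ to the right-hand side. Consequently the set of admissible $r$ is an up-set; being cut out by non-strict inequalities it is also closed, hence of the form $[R,\infty)$ for some $R\ge0$. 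Writing $R$ for this threshold, Zalcman's conjecture is the assertion $R=0$, while Conjecture~\ref{conj2} is precisely the assertion $R\le1$.

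The payoff is that $R\le1$ is equivalent to the validity of the single endpoint statement $(C_1)$, which after using $(n-1)^2+(2n-1)=n^2$ reads
\[
 |a_n^2-a_{2n-1}|+|a_{2n-1}|\le n^2, \qquad f\in S,\ n\ge2.
\]
I would then record the two flanking implications that justify the position of this statement. On one side, $(C_1)$ at once recovers the Bieberbach bound, since $|a_n|^2\le|a_n^2-a_{2n-1}|+|a_{2n-1}|\le n^2$; on the other side, the implication (a)$\Rightarrow$(c) of Theorem~\ref{thm-eq} shows that the full Zalcman conjecture forces $(C_r)$ for all $r>0$, in particular $(C_1)$. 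Thus $(C_1)$ sits between the two, and Conjecture~\ref{conj2} acquires the clean reformulation displayed above.

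The main obstacle is the inequality $(C_1)$ itself in full generality over $S$: this is exactly the content of the conjecture and cannot be reached by the elementary estimates used so far, since substituting the crude bounds $|a_n|\le n$ and $|a_{2n-1}|\le 2n-1$ into the triangle inequality overshoots $n^2$ by $2(2n-1)$, so a genuine univalence input would be required. What the plan can honestly deliver is the reduction to $(C_1)$ together with its verification on the restricted families of Section~\ref{sect-special}, where it follows from Theorem~\ref{other-ie} and Proposition~\ref{prop-known-gen}; the statement for all of $S$ must remain conjectural.
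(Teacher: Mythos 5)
The statement you were asked to prove is labelled a \emph{Conjecture} and is left open in the paper; the authors' only contribution to it is Theorem~\ref{B_t}, which situates Conjecture~\ref{conj2} between the Zalcman and Bieberbach conjectures via the chain Zalcman $\Rightarrow (D_r) \Rightarrow (C_r) \Rightarrow (B_{1-r}) \Rightarrow$ Bieberbach. So there is no proof in the paper to compare against, and you are right not to manufacture one: your honest conclusion that $(C_1)$ over all of $S$ must remain conjectural is the correct outcome.

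What you do establish is sound and consistent with the paper. The monotonicity step (granted $|a_{2n-1}|\le 2n-1$, if $(C_{r_0})$ holds then $(C_r)$ holds for all $r\ge r_0$, by adding $(r-r_0)|a_{2n-1}|\le (r-r_0)(2n-1)$ to $(C_{r_0})$) is exactly an observation the authors record elsewhere, and it correctly reduces Conjecture~\ref{conj2} to the single endpoint inequality
$$
 |a_n^2-a_{2n-1}|+|a_{2n-1}|\le n^2 .
$$
Your one-line deduction of Bieberbach from this, $|a_n|^2\le |a_n^2-a_{2n-1}|+|a_{2n-1}|\le n^2$, is in fact cleaner than the paper's route, which passes through $(B_{1-r})$ and the Brown--Tsao iteration starting from Littlewood's bound. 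Two minor cautions. First, your parenthetical claim that the admissible set of $r$ has the form $[R,\infty)$ tacitly assumes it is nonempty, which is not obvious a priori (one needs a single $r$ working uniformly over all $f\in S$ and all $n$); this plays no role in the equivalence with $(C_1)$, which only uses the up-set property on $[0,1]$, but you should not assert the existence of the threshold $R$ without comment. Second, note that the restricted classes of Section~\ref{sect-special} verify analogues of $(C_1)$ with their own, smaller coefficient bounds rather than the inequality $|a_n^2-a_{2n-1}|+|a_{2n-1}|\le n^2$ literally, so the ``verification on restricted families'' should be phrased accordingly.
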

\par\medskip
It also makes sense to consider a weaker version of condition (d) in Theorem~\ref{thm-eq}. For a fixed $r$, say $r\in [0,1]$, consider
\begin{equation}
 \tag{$D_r$} |a_n^2-wa_{2n-1}| \le (n-1)^2 + |w-1| (2n-1)\,, \quad \textrm{for\, all\,} w \ \, \textrm{with\,} |w-1|=r\,,
\end{equation}
for all $f\in S$ with $f(z)=z+ \sum_{k=2}^\infty a_k z^k$, and all $n\ge 2$. Thus, we have the \emph{third weak\/} version of the Zalcman conjecture.
\par
\begin{conj} \label{conj3}
There exists $r\in [0,1]$ such that $(D_r)$ holds.
\end{conj}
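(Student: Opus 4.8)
The plan is to collapse this existence statement to a single sharp coefficient inequality and then attack that inequality directly. First I would record that $(D_r)$ only gets weaker as $r$ grows: if $(D_{r_1})$ holds and $r_2\ge r_1$, then writing an arbitrary $w$ with $|w-1|=r_2$ as $w=1+r_2e^{i\phi}$ and comparing it with $w'=1+r_1e^{i\phi}$, the triangle inequality, the hypothesis $(D_{r_1})$, and the de~Branges bound $|a_{2n-1}|\le 2n-1$ give
\[
 |a_n^2-wa_{2n-1}|\le |a_n^2-w'a_{2n-1}|+(r_2-r_1)|a_{2n-1}|\le (n-1)^2+r_2(2n-1),
\]
so $(D_{r_2})$ holds as well. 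Hence the existence of an admissible $r\in[0,1]$ is \emph{equivalent} to the single statement $(D_1)$. Because the circle $|w-1|=1$ realizes every possible argument of $1-w$, the triangle inequality (exactly as in Lemma~\ref{Yakub}) shows that $(D_1)$ is in turn equivalent to
\[
 |a_n^2-a_{2n-1}|+|a_{2n-1}|\le n^2,\qquad f\in S,\ n\ge2,
\]
which is precisely $(C_1)$ and which the rotations of the Koebe function show would be best possible. On the two real points of this circle the statement is already transparent: $w=0$ is de~Branges' inequality $|a_n|^2\le n^2$, whereas $w=2$ gives $|a_n^2-2a_{2n-1}|\le n^2$, the consequence of Zalcman's conjecture noted after Theorem~\ref{thm-eq}; only the first of these is presently known.

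Second, I would clear away the values of $n$ for which the classical Zalcman inequality is available. Whenever $|a_n^2-a_{2n-1}|\le (n-1)^2$ is known --- the small $n$ settled by Krushkal~\cite{K1,K2}, and in particular $n=2$ --- adding the de~Branges bound $|a_{2n-1}|\le 2n-1$ yields $(C_1)$ at once, since $(n-1)^2+(2n-1)=n^2$. The same arithmetic establishes $(D_1)$ outright for each of the subclasses treated in Section~\ref{sect-special}, where sharp Zalcman-type estimates are at hand. The whole difficulty is therefore concentrated in the values of $n$ for which the Zalcman conjecture itself is still open.

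For those $n$ the natural idea is to exploit the trade-off encoded in $(C_1)$, namely that it is equivalent to $|a_n^2-a_{2n-1}|\le (n-1)^2+\bigl((2n-1)-|a_{2n-1}|\bigr)$, so one is permitted a slack of $(2n-1)-|a_{2n-1}|$ beyond the sharp Zalcman bound. I would split on the size of $|a_{2n-1}|$: when it is bounded away from its maximum one hopes to spend the available slack on a non-sharp, unconditional estimate for the Zalcman functional coming from the Grunsky or Goluzin inequalities, while when $|a_{2n-1}|$ is near-maximal one would invoke the rigidity of de~Branges' theorem, a near-extremal $|a_{2n-1}|$ forcing $f$ to be close to a rotation of the Koebe function and hence $a_n^2-a_{2n-1}$ close to its Koebe value $(n-1)^2$. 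The main obstacle --- and the reason this is stated only as a conjecture --- is that neither half of the dichotomy is presently available in the form required: the known unconditional bounds for $a_n^2-a_{2n-1}$ effectively add $|a_{2n-1}|$ rather than subtracting it and so never generate the needed slack, and no sufficiently strong, $n$-uniform stability version of the de~Branges theorem is known. In short, proving $(C_1)$ for the open values of $n$ still demands genuine control of the cancellation in $a_n^2-a_{2n-1}$ as a function of $|a_{2n-1}|$, which is exactly the feature that makes Zalcman's problem hard.
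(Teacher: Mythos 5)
You were asked to prove a statement that the paper itself labels a \emph{Conjecture}: the paper offers no proof of Conjecture~\ref{conj3}, only its place in the chain of Theorem~\ref{B_t} (Zalcman $\Rightarrow$ Conjecture~\ref{conj3} $\Rightarrow$ Conjecture~\ref{conj2} $\Rightarrow$ Conjecture~\ref{conj1} $\Rightarrow$ Bieberbach) together with its asymptotic validity. So there is no proof in the paper to match, and you correctly refrain from claiming one: your final paragraph is a program, not an argument, and you say so explicitly. Judged strictly as a proof attempt, the gap is exactly where you locate it --- the dichotomy on the size of $|a_{2n-1}|$ would need an unconditional bound on $a_n^2-a_{2n-1}$ that spends rather than consumes the slack $(2n-1)-|a_{2n-1}|$, and a quantitative, $n$-uniform stability version of de~Branges' theorem near the Koebe function; neither exists. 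But that gap \emph{is} the open problem, not an oversight of yours.

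The reductions, which are the real content of your proposal, are correct and in fact go beyond what the paper states. The monotonicity of $(D_r)$ in $r$ (using $|a_{2n-1}|\le 2n-1$, which the paper takes for granted throughout this section) collapses the existence statement to the single condition $(D_1)$; the argument-alignment step --- the same device the paper uses to prove Theorem~\ref{B_t}(b), in the spirit of Lemma~\ref{Yakub} --- together with the triangle inequality gives $(D_1)\Leftrightarrow(C_1)$, i.e. $|a_n^2-a_{2n-1}|+|a_{2n-1}|\le n^2$ for all $f\in S$ and $n\ge2$, with equality for rotations of the Koebe function. Since $(C_r)$ is monotone in $r$ by the same token, your observations upgrade the one-way implication (b) of Theorem~\ref{B_t} into an equivalence of Conjectures~\ref{conj2} and~\ref{conj3}, which is a genuinely useful sharpening. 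One factual slip: your claim that ``the same arithmetic establishes $(D_1)$ outright for each of the subclasses treated in Section~\ref{sect-special}'' fails for the closed convex hulls, which contain non-univalent functions. For $\overline{{\rm co}(S^*)}$ the extremal function \eqref{S-extr} with $m=n=2$, namely $f(z)=z+3z^3+5z^5+\cdots$, has $|a_2^2-a_3|+|a_3|=6>4=n^2$, so $(C_1)$, and hence $(D_1)$, is false on that class; the arithmetic does work for $\ch$, $\rad$, and the genuinely univalent classes $C$ and $S^*$ where Brown--Tsao applies. Since the conjecture concerns $f\in S$, this does not affect your main reduction, but the side claim should be corrected.
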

\par\medskip
The following relationship exists between the conjectures mentioned.
\par\medskip
\begin{thm} \label{B_t}
Assume only a weaker statement than the Bieberbach conjecture, for example, Littlewood's theorem \cite[Theorem~2.8]{Du}: $|a_n|< e n$, for all $n\ge 2$. Under these assumptions we have:
\begin{itemize}
\item[(a)] The Zalcman conjecture implies Conjecture~\ref{conj3}.
\item[(b)] Conjecture~\ref{conj3} implies Conjecture~\ref{conj2}.

\item[(c)] Conjecture~\ref{conj2} implies Conjecture~\ref{conj1}  (with $t=1-r$).
\item[(d)] Conjecture~\ref{conj1} implies the Bieberbach conjecture.
\item[(e)] All weak conjectures: Conjecture~\ref{conj1}, Conjecture~\ref{conj2}, and Conjecture~\ref{conj3} are asymptotically true. For example, if $f$ is a function in $S$ with Hayman index $\a$, and $t \in [0,1]$ then
\begin{equation}
 \lim_{n\rightarrow\infty} \frac{|a_n^2 - t a_{2n-1}|}{n^2 -t(2n-1)} = \alpha^2\,.
\label{haym}
\end{equation}
\end{itemize}
\end{thm}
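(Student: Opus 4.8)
The plan is to prove the five assertions essentially independently, reading (a)--(d) as a descending chain of single-inequality implications and (e) as an asymptotic statement parallel to Theorem~\ref{thm-haym}; the only ingredients beyond the hypothesis $|a_n|<en$ are the triangle inequality and the doubling substitution $N_{j+1}=2N_j-1$. Claim (a) is immediate: the value $r=0$ forces $w=1$ in the statement $(D_r)$, so $(D_0)$ is verbatim the Zalcman inequality, and hence the Zalcman conjecture yields $(D_0)$ and Conjecture~\ref{conj3}. (More is true: since the Zalcman conjecture is in particular the instance $(B_1)$, part (d) below supplies the full Bieberbach conjecture, whereupon Theorem~\ref{thm-eq} promotes this to $(D_r)$ for every $r\in[0,1]$.)

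Claims (b) and (c) both rest on the triangle inequality, run once in each direction. For (b) I would exploit its equality case: given $(D_r)$, fix $f$ and $n$ and (when $a_{2n-1}\neq0$) choose the admissible $w$ with $|w-1|=r$ whose argument makes $-(w-1)a_{2n-1}$ a nonnegative multiple of $a_n^2-a_{2n-1}$; then $|a_n^2-wa_{2n-1}|=|a_n^2-a_{2n-1}|+r|a_{2n-1}|$ exactly, so $(D_r)$ becomes $(C_r)$, i.e.\ Conjecture~\ref{conj2}. For (c) I would instead use the lossy direction: from $(C_r)$,
$$|a_n^2-(1-r)a_{2n-1}|\le|a_n^2-a_{2n-1}|+r|a_{2n-1}|\le(n-1)^2+r(2n-1)=n^2-(1-r)(2n-1),$$
which is exactly $(B_{1-r})$; for $r<1$ this gives Conjecture~\ref{conj1} with $t=1-r\in(0,1]$, while the degenerate value $r=1$ already forces $|a_n|\le n$ and may be excluded.

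The crux, and what I expect to be the main obstacle, is (d): upgrading a single weak inequality $(B_t)$ to the whole Bieberbach conjecture, a quantitative form of the Brown--Tsao deduction of Bieberbach from Zalcman. Fixing $t\in(0,1]$ with $(B_t)$, fixing $N\ge2$, and iterating $N_0=N$, $N_{j+1}=2N_j-1$, I set $b_j=|a_{N_j}|$ and $\d_j=b_j-N_j$. Because $2N_j-1=N_{j+1}$, the reverse triangle inequality turns $(B_t)$ at index $N_j$ into $b_j^2-t\,b_{j+1}\le N_j^2-t\,N_{j+1}$, whence $\d_{j+1}\ge\frac1t(b_j^2-N_j^2)\ge\frac{2N_j}{t}\,\d_j$ as soon as $\d_j\ge0$. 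Thus if $\d_0>0$ the excesses dominate $\prod_{i<j}(2N_i/t)$, which is super-exponential in $j$ (since $N_i\ge2^i$ and $t\le1$), whereas Littlewood's bound forces $\d_j<(e-1)N_j=O(2^j)$; the contradiction gives $\d_0\le0$, i.e.\ $|a_N|\le N$. The delicate point is purely bookkeeping: arranging the recursion so the super-geometric-versus-geometric comparison is transparent, and observing that any polynomial coefficient bound (so Littlewood is far more than enough) closes it.

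Finally, (e) follows the template of Theorem~\ref{thm-haym}. Hayman's regularity theorem gives $|a_n|/n\to\a$, so $|a_n|^2=\a^2n^2+o(n^2)$ while $t|a_{2n-1}|=O(n)=o(n^2)$; hence the numerator $|a_n^2-ta_{2n-1}|$ differs from $|a_n|^2$ by $o(n^2)$ and equals $\a^2n^2+o(n^2)$, and dividing by $n^2-t(2n-1)\sim n^2$ yields the stated limit $\a^2$. Since $\a^2\le1$, with strict inequality unless $f$ is a rotation of the Koebe function, each of $(B_t)$, $(C_r)$, $(D_r)$ holds for all sufficiently large $n$, which is precisely the asymptotic validity of the three weak conjectures.
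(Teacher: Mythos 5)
Your proposal is correct, and parts (a), (b), (c) and (e) coincide with the paper's argument: (a) is the trivial observation that $(D_0)$ is Zalcman's inequality itself; (b) uses exactly the paper's choice of $w$ on the circle $|w-1|=r$ realizing equality in the triangle inequality (with the degenerate cases $a_{2n-1}=0$ or $a_n^2=a_{2n-1}$ handled trivially); (c) is the same two-line triangle-inequality computation; and (e) is the same Hayman-regularity asymptotics that the paper omits by reference to Theorem~\ref{thm-haym}. Where you genuinely diverge is (d). The paper follows Brown--Tsao verbatim: from a uniform bound $|a_n|\le Cn$ it derives $|a_n|^2\le n^2+t(C-1)(2n-1)\le Cn^2$ for \emph{all} $n$ simultaneously, so the constant self-improves $C\mapsto\sqrt{C}$, and iterating gives $C^{2^{-k}}\to1$. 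You instead fix a single index $N$, follow its orbit $N_{j+1}=2N_j-1$, and show via the reverse triangle inequality that a positive excess $\delta_0=|a_N|-N>0$ would propagate as $\delta_{j+1}\ge(2N_j/t)\,\delta_j$, hence grow like $2^{j(j+1)/2}$, contradicting the $O(2^j)$ bound coming from Littlewood. Both arguments are valid and start from the same hypothesis; the paper's is shorter and yields the clean quantitative statement $|a_n|\le C^{2^{-k}}n$ at every stage, while yours is more robust in that it visibly closes from \emph{any} polynomial a priori bound $|a_n|=O(n^p)$, not just a linear one, and isolates the single coefficient being estimated. One small point worth flagging in (c): when $r=1$ the recipe $t=1-r$ lands outside the range $t\in(0,1]$ demanded by Conjecture~\ref{conj1}; you dispose of this case by noting that $(C_1)$ gives $|a_n|^2\le|a_n^2-a_{2n-1}|+|a_{2n-1}|\le n^2$ directly, which is a reasonable reading (the paper glosses over the same edge case by allowing $t\in[0,1]$ in its proof).
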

\begin{proof} (a) This implication is trivial.
\par\medskip
(b) If Conjecture~\ref{conj3} is true, then for the corresponding value of $r$ we have
$$
 |a_n^2-w a_{2n-1}| \le (n-1)^2+r(2n-1)
$$
for all $w$ on the circle $\{w\,\colon\,|w-1|=r\}$. If $a_n- a_{2n-1}\neq 0$ and $a_{2n-1}\neq 0$ we can choose a (unique) $w$ on this circle with $\arg w= \arg (a_n^2- a_{2n-1}) - \arg a_{2n-1}$ so as to obtain
$$
 |a_n^2-w a_{2n-1}| = |a_n^2 - a_{2n-1} + (1-w) a_{2n-1}| = |a_n^2 - a_{2n-1}| + r |a_{2n-1}|\,,
$$
and $(C_r)$ follows. If any of the values $a_n^2- a_{2n-1}$, $a_{2n-1}$ is zero, the statement also holds trivially.
\par\medskip
(c) Assume that Conjecture~\ref{conj2} is true. For the corresponding $r\in [0,1]$, consider $t=1-r\in [0,1]$. Then by the triangle inequality
\begin{eqnarray*}
 |a_n^2-ta_{2n-1}| &\le & |a_n^2-a_{2n-1}| + r |a_{2n-1}|
\\
 &\le & (n-1)^2+r(2n-1)
\\
 &\le & n^2-t(2n-1)\,,
\end{eqnarray*}
which proves that Conjecture~\ref{conj1} is true.
\par\medskip
(d) To show that Conjecture~\ref{conj1} implies the Bieberbach inequality, we follow the idea of Brown and Tsao from \cite{BT}. Begin with a weaker bound for the $n$-th coefficient, say $|a_n| \le C n$, for some $C>1$, and then improve on it using condition $(B_t)$. As was mentioned, we can start off from Littlewood's theorem and $C=e$. Note that
$$
 t\le 1 < \frac{n^2}{2n-1}\,, \quad \textrm{for \, all\, } n\ge 2\,.
$$
Hence
\begin{align*}
 |a_n|^2 & \le |a_n^2 -ta_{2n-1}| + t|a_{2n-1}|
\\
 & \le n^2 -t(2n-1) + C t (2n-1)
\\
 & =  n^2 + t (C-1) (2n-1)
\\
 & \le C n^2\,.
\end{align*}
Hence, $|a_n| \le \sqrt{C} \, n$. Iterating this procedure, we obtain $|a_n| \le C^{2^{-k}} n$ for all positive integers $k$, which yields $|a_n| \le n$.
\par\medskip
(e) The proof is quite similar to that of Theorem~\ref{thm-haym} so we omit it.
\end{proof}


\end{document}